\DeclareMathOperator{\R}{\mathbb{R}}
\DeclareMathOperator{\as}{as}
\DeclareMathOperator{\conv}{conv}
\DeclareMathOperator{\dell}{del}
\DeclareMathOperator{\divv}{div}
\DeclareMathOperator{\diam}{diam}
\DeclareMathOperator{\var}{Var}
\DeclareMathOperator{\intt}{int}
\let\originalleft\left
\let\originalright\right
\renewcommand{\left}{\mathopen{}\mathclose\bgroup\originalleft}
\renewcommand{\right}{\aftergroup\egroup\originalright}
\newtheorem{theorem}{Theorem}[section]
\newtheorem{corollary}[theorem]{Corollary}
\newtheorem{lemma}[theorem]{Lemma}
\newtheorem{remark}[theorem]{Remark}
\newtheorem{conj}[theorem]{Conjecture}
\newtheorem{ques}[theorem]{Question}
\newcommand{\NN}{N^{-\frac 2{n-1}}}
\newcommand{\Sn}{\mathbb{S}^{n-1}}
\newcommand\inneri[2]{\langle #1, #2 \rangle}
\newcommand{\Sp}{\mathbb{S}^{n-1}}
\newcommand{\E}{{\mathbb{E}}}
\begin{document}

\title[Random Polytopes, Dirichlet-Voronoi Tiling Numbers and Geometric Balls and Bins  ]{A Concentration Inequality for Random Polytopes, Dirichlet-Voronoi Tiling Numbers and the Geometric Balls and Bins Problem}



\author{Steven Hoehner}     
\address{Longwood University, Department of Mathematics and Computer Science}
\email{hoehnersd@longwood.edu}
\author{Gil Kur} 
\address{Weizmann Institute of Science,
               Department of Computer Science and Mathematics}
               \email{gilkur1990@gmail.com}


\subjclass[2000]{Primary 52A22, 52A27, 52C17, 52C22}
\maketitle

\begin{abstract}
	Our main contribution is a concentration inequality for the symmetric volume difference of a $ C^2 $ convex body with  positive Gaussian curvature and a circumscribed random polytope with a restricted number of facets, for any probability measure on the boundary with a  positive density function.
	 	 
We also show that the Dirichlet-Voronoi tiling numbers satisfy $ \text{div}_{n-1} = (2\pi e)^{-1}(n+\ln n) + O(1)$, which improves a classical result of Zador by a factor of $o(n)$. In addition, we provide a remarkable open problem which is the natural geometric generalization of the famous and fundamental ``balls and bins" problem from probability.   This problem  is tightly connected to the optimality of random polytopes in high dimensions.

Finally, as an application of the aforementioned results, we derive a lower bound for the maximal Mahler volume product of polytopes with a restricted number of vertices or facets.
\keywords{ Random polytopes \and approximation\and convex bodies \and sphere covering \and tiling}
\end{abstract}


\section{Introduction and main results}

The approximation of convex bodies by polytopes is of significant theoretical interest in convex geometry, and it has applications in a wide variety of areas, including tomography (e.g., \cite{gardner1995geometric}), computational geometry (e.g., \cite{edelsbrunner,edelsbrunnermesh}), geometric algorithms (e.g., \cite{GKM}) and statistical learning theory (e.g., \cite{diakonikolas2016learning}). The accuracy of the approximation  is often measured by the symmetric volume difference $d_S$ (also called the symmetric difference metric, or the Nikodym metric), which equals the volume of the symmetric difference of a given convex body $K$ in $\R^n$ and an approximating polytope $P$. It is defined by
$d_S(K,P) := |K\triangle P|,$ where $|\cdot|$ denotes $n$-dimensional volume. 

Typically, conditions are prescribed on the approximating polytopes, such as a restricted number of vertices or facets. Moreover, the approximating polytopes may be inscribed in $K$, circumscribed around $K$, or positioned arbitrarily. In this paper, we focus on the approximation of convex bodies by circumscribed and arbitrarily positioned polytopes with a restricted number of facets under the symmetric volume difference. 

Due to the difficulty of finding  best-approximating polytopes (even when $n = 3$), random polytopes can and have been used to derive sharp estimates for the approximation of convex bodies. In fact, it turns out that, asymptotically, random polytopes are almost as good as best-approximating polytopes under the symmetric difference metric \cite{boroczky2004approximation,GruberII,kur2017approximation,ludwig1999asymptotic,Lud06,muller1990approximation,schutt2003polytopes}. 
In our first theorem, we derive a sharp concentration inequality (up to logarithmic factors)  for the volume difference of a $ C^2 $ convex body $K$ with  positive Gaussian curvature and a random circumscribed polytope with $N$ facets. 
The expected volume of these polytopes was estimated in \cite{boroczky2004approximation}. 



\begin{theorem}{\label{geneThm}}
	Let $ P_{n,N} $ be a random circumscribed polytope in $ \R^{n} $ with at most $N$ facets that is defined by
	\begin{equation}\label{Eq:def}
	P_{n,N} := \bigcap_{i=1}^{N}\{x\in\R^n: \inneri{x}{\nu(X_i)} \leq \inneri{X_i}{\nu(X_i)} \}, \quad X_1,\ldots,X_N \stackrel{\text{i.i.d.}}{\sim} \mu,
	\end{equation} where   $ \nu:\partial K \to \Sp $ denotes the Gauss map of $K$. 
	Then when $ N$  is large enough, for any $ \epsilon > 0 $ the following holds:
	\begin{equation}{\label{maineq}}
	\Pr_A(\big||P_{n,N} \setminus K| - \E[|P_{n,N}\setminus K|]\big| \geq \epsilon) \leq 2\exp\left(-c(K,\mu)N^{1+\frac{4}{n-1}}f(N)\epsilon^2\right),
	\end{equation}
	where $A$ is an event that holds with probability at least $1- \exp\left(-c_1(K,\mu)N^{0.5-\frac{2}{n-1}}\ln N\right)$, $c(K,\mu)$ and $c_1(K,\mu)$ are  positive constants that depend on $K$ and $\mu$, and $ f(N) = (\ln N)^{-(2+\frac{4}{n-1})}$. 
\end{theorem}
This result implies that the conditional variance is bounded above by $C(K,\mu)N^{-(1+\frac{4}{n-1})}f(N)^{-1}$.

The proof of Theorem \ref{geneThm} also gives a concentration inequality for the volume of the arbitrarily positioned random polytopes that were defined in \cite{kur2017approximation}; these polytopes give an optimal approximation to the Euclidean unit ball $B_n$ when $\mu=\sigma$ is the uniform measure on the sphere.
\begin{corollary}{\label{mainThm}}
	Let $ P_{n,N} $ be the random polytope in $ \R^{n} $ that is defined by
	\[
	P_{n,N} := \bigcap_{i=1}^{N}\{x\in\R^n: \inneri{X_i}{x} \leq t_{n,N} \}, 
	\quad X_1,\ldots,X_N \stackrel{\text{i.i.d.}}{\sim}  \sigma,
	\] 
	where $t_{n,N} := {\sqrt{1-\varepsilon_{n,N}^2}}$ and $\varepsilon_{n,N} := \left(\frac{(\ln 2)|\partial B_n|}{N|B_{n-1}|}\right)^{\frac{1}{n-1}}$.	 
	Then when $N$ is large enough, the random variable $|P_{n,N} \triangle B_n|$ satisfies the concentration inequality
	\[
	\Pr_A(\big||P_{n,N}\triangle B_n|-\E[|P_{n,N}\triangle B_n|]\big| \geq \epsilon) 
	\leq 2\exp\left(-c(n)N^{1+\frac{4}{n-1}}f(N)\epsilon^2\right),
	\]
	where $A$ is an event that holds with probability at least $\exp\left(-c_1(n)N^{0.5-\frac{2}{n-1}}\ln N\right)$, $c(n)$ and $c_1(n)$ are positive constants that depend only on the dimension, and $ f(N)=(\ln N)^{-(2+\frac{4}{n-1})}$. 
\end{corollary}

Using 
 an argument of Reitzner \cite{reitzner2003random}, we derive the following corollary to Theorem \ref{geneThm}.
\begin{corollary}\label{corollary10}
	Consider the sequence of circumscribed random polytopes $(P_{n,N})_{N}$ defined  in Theorem \ref{geneThm}. Then with probability 1,
	\[
	\lim_{N\to\infty} N^{\frac{2}{n-1}}|P_{n,N}\setminus  K| = \frac{1}{2}|B_{n-1}|^{-\frac{2}{n-1}} \Gamma\left(\frac{2}{n-1}+1\right)\int_{\partial K}g(x)^{-\frac{2}{n-1}} \kappa(x)^{\frac{1}{n-1}} \,dx,
	\] 
	where $g(x)\,dx = d\mu(x)$ and $g(x)>0$.
\end{corollary}

Our next result is an application of \cite{kur2017approximation} and provides an improvement to a result of Zador \cite{zador1982asymptotic} on the asymptotic behavior of the Dirichlet-Voronoi tiling number in $\R^n$, denoted by $\divv_{n-1}$.
These numbers have numerous definitions. From a geometric point of view, Gruber \cite{GruberII} proved that for any convex body $ K $ in $\R^n$ with  $C^2$ boundary and  positive Gaussian curvature $\kappa$, the following asymptotic formulas hold:
\begin{align}\label{GruberOutEqn}
\lim_{N\to\infty}N^{\frac{2}{n-1}}\min\{|P\setminus K|: P\supset K, P\text{ has at most }N \text{ facets}\} &= \frac{1}{2}\divv_{n-1}(\as(K))^{\frac{n+1}{n-1}}\\
\label{GruberInEqn}
\lim_{N\to\infty}N^{\frac{2}{n-1}}\min\{|K\setminus P|: P\subset K, P\text{ has at most }N \text{ vertices}\} &= \frac{1}{2}\dell_{n-1}(\as(K))^{\frac{n+1}{n-1}}.
\end{align}
Here $\as(K)=\int_{\partial K}\kappa(x)^{\frac{1}{n+1}}\,dS(x)$ is the affine surface area of $K$ (see, e.g., \cite{SchneiderBook}). 
The term $\dell_{n-1}$ is a positive constant that depends only on the dimension and is known as the Delone triangulation number in $\R^n$. A strong connection between  optimal Delone triangulations, sphere covering, and asymptotic best approximation of convex bodies by inscribed polytopes with a restricted number of vertices was exhibited by Chen \cite{chen}.

The exact values of $\dell_{n-1}$ and $\divv_{n-1}$ are unknown for $ n \geq 4 $. For $ n=2$, it is known\footnote{Please note that for $n=2$, Eqs. (\ref{GruberOutEqn}) and (\ref{GruberInEqn})  were stated by T\'oth \cite{toth} and proved by McClure and Vitale \cite{McVi}; for $n=3$, they were also stated in \cite{toth}, and later proved by Gruber in \cite{GruberIn,GruberOut}.} that $\dell_1=\frac{1}{6}$ and $\divv_{1}=\frac{1}{12}$ (see, e.g., \cite{ludwig1999asymptotic}), and for $n=3$ the values  $\dell_2=\frac{1}{2\sqrt{3}}$ and $\divv_2=\frac{5}{18\sqrt{3}}$ were determined by Gruber in \cite{GruberIn} and \cite{GruberOut}, respectively. Surprisingly, for large $n$, the constants $\dell_{n-1}$ and $\divv_{n-1}$ are nearly equal. First,  in the groundbreaking paper \cite{zador1982asymptotic}, Zador proved that $\divv_{n-1} =(2\pi e)^{-1}n +o(n)$.  On the other hand, $\dell_{n-1}$ was estimated in several papers. The best known estimate was provided by Mankiewicz and Sch\"utt \cite{MaS1}, who proved that 
$\dell_{n-1} = (2\pi e)^{-1}n + O(\ln n)$. In the next theorem, we provide an almost sharp estimate for $\divv_{n-1}$. 

\begin{theorem}\label{deldiv}
	Let  $\divv_{n-1} $  be defined as in Eq. \eqref{GruberOutEqn}. Then 
	\begin{equation}\label{deldiv2}
	\divv_{n-1} = (2\pi e)^{-1}(n+\ln n)+O(1).
	\end{equation}
\end{theorem}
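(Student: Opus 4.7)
The plan is to sandwich $\divv_{n-1}$ between two explicit quantities that, via Stirling's formula, both equal $(2\pi e)^{-1}(n+\ln n) + O(1)$. A careful Stirling expansion of $|D_{n-1}| = \pi^{(n-1)/2}/\Gamma((n+1)/2)$, using $\ln\Gamma((n+1)/2) = (n/2)\ln((n+1)/2) - (n+1)/2 + \tfrac{1}{2}\ln(2\pi) + O(1/n)$, yields $|D_{n-1}|^{-2/(n-1)} = (2\pi e)^{-1}(n+\ln n) + O(1)$. Consequently it suffices to bracket $\divv_{n-1}$ between two multiples of $|D_{n-1}|^{-2/(n-1)}$ whose multiplicative factors are each $1+O(1/n)$.

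For the upper bound I would invoke formula (\ref{boereitzner1}) with $K = D_n$ and the uniform density $g \equiv 1/|\partial D_n|$ on $\Sp$ (which is optimal for the ball since $\kappa \equiv 1$). Using $\min \leq \E$ and comparing with (\ref{GruberOutEqn}) via $\as(D_n) = |\partial D_n|$, the common factor $|\partial D_n|^{(n+1)/(n-1)}$ cancels and leaves $\divv_{n-1} \leq \Gamma(\tfrac{2}{n-1}+1)|D_{n-1}|^{-2/(n-1)}$. Since $\Gamma(\tfrac{2}{n-1}+1) = 1 - 2\gamma/(n-1) + O(1/n^2) = 1 + O(1/n)$, the Stirling estimate produces the upper half of the theorem.

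For the lower bound I would argue geometrically. Given any circumscribed polytope $P \supset D_n$ with $N$ facets tangent at $x_1,\ldots,x_N \in \Sp$, the radial-function identity gives $|P \setminus D_n| = (1/n)\sum_i \int_{V_i}(\langle x_i,u\rangle^{-n} - 1)\,du$ on the spherical Voronoi cells $V_i$ of $\{x_i\}$. The bound $\cos^{-n}\theta \geq \exp(n\theta^2/2) \geq 1 + n\theta^2/2$ (asymptotically tight for the relevant $\theta \sim N^{-1/(n-1)}$) reduces the problem to lower-bounding $\sum_i \int_{V_i}\theta_i^2\,du$. The spherical isoperimetric inequality (asymptotically Euclidean at this scale) implies $\int_{V_i}\theta^2\,du \geq \tfrac{n-1}{n+1}|D_{n-1}|^{-2/(n-1)}A_i^{(n+1)/(n-1)}(1-o(1))$ where $A_i = |V_i|$, and Jensen's inequality applied to the convex map $A \mapsto A^{(n+1)/(n-1)}$ with $\sum A_i = |\partial D_n|$ delivers $\sum_i A_i^{(n+1)/(n-1)} \geq |\partial D_n|^{(n+1)/(n-1)}N^{-2/(n-1)}$. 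Substituting into (\ref{GruberOutEqn}) produces $\divv_{n-1} \geq \tfrac{n-1}{n+1}|D_{n-1}|^{-2/(n-1)}$, and since $\tfrac{n-1}{n+1} = 1 + O(1/n)$, the Stirling estimate gives the matching lower half.

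The main obstacle is making the lower-bound chain rigorous at fixed $n$: the passage from the spherical isoperimetric inequality to the clean Euclidean form, together with the higher-order Taylor term in $\cos^{-n}\theta - 1$, must propagate through Jensen with an error that vanishes as $N\to\infty$ while still preserving the precise dimensional constant $\tfrac{n-1}{n+1}|D_{n-1}|^{-2/(n-1)}$. Careful bookkeeping is needed to guarantee these corrections do not erode the logarithmic improvement $\ln n$ hiding inside $|D_{n-1}|^{-2/(n-1)}$, since it is exactly this term that upgrades the Zador estimate from $o(n)$ to $O(1)$.
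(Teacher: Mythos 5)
Your proposal is correct in substance but takes a genuinely different route from the paper on both halves of the estimate. For the upper bound, you feed the uniform density into the B\"or\"oczky--Reitzner expectation formula \eqref{boereitzner1} for $K=D_n$ and compare with \eqref{GruberOutEqn} via $\min\le\E$, which collapses in one line to $\divv_{n-1}\le\Gamma\bigl(\tfrac{2}{n-1}+1\bigr)|D_{n-1}|^{-\frac{2}{n-1}}$; the paper instead inflates the explicit near-optimal polytope of Kur \cite{kur2017approximation} (all supporting hyperplanes at height $t_{n,N}$) by the factor $t_{n,N}^{-1}$ and estimates $|\tilde P\setminus D_n|$ directly. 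Both routes end at Stirling, though yours is organized around $|D_{n-1}|^{-\frac{2}{n-1}}=(2\pi e)^{-1}(n+\ln n)+O(1)$ while the paper works through $|D_n|^{-\frac{2}{n-1}}$ and cancels an extra $n^{\frac{n+1}{n-1}}$ from $|\partial D_n|^{\frac{n+1}{n-1}}$; your form is a slightly cleaner endpoint. The divergence on the lower bound is more substantial: you re-derive an optimal lower bound from first principles using the radial identity $|P\setminus D_n|=\tfrac 1n\sum_i\int_{V_i}(\langle x_i,u\rangle^{-n}-1)\,du$ over spherical Voronoi cells, the exact pointwise inequality $\cos^{-n}\theta\ge 1+\tfrac n2\theta^2$, a second-moment rearrangement bound (Schwarz symmetrization, not isoperimetry in the usual perimeter sense) on each $V_i$, and Jensen for $A\mapsto A^{\frac{n+1}{n-1}}$. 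The paper shortcuts entirely: it assumes a circumscribed polytope violating the bound, shrinks it to volume $|D_n|$, and contradicts the universal lower bound for $|P\triangle D_n|$ in Theorem 2 of \cite{Lud06}. Your version is self-contained and geometrically transparent, but as you note it requires carefully controlling the spherical-to-tangent-plane approximation and the WLOG reduction to tangent facets at fixed $n$, whereas the paper's argument is short precisely because Ludwig's theorem already packages that work. Both proofs are valid; neither clearly dominates.
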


\noindent Theorem \ref{deldiv} and the estimate for $\dell_{n-1}$ imply that, asymptotically, $ \dell_{n-1} $ and $ \divv_{n-1} $ differ by at most $O(\ln n)$.

\vskip 2mm
 
%

 
%


 The following result is a remarkable application of Theorem \ref{deldiv} and the main result in  \cite{boroczky2004approximation} (the asymptotic notation $O(\cdot)$  is with respect to the dimension $n$).
\begin{corollary}\label{randomvsbest}
	Let $ P_{K,N}^b $ be a best-approximating polytope with at most $N$ facets that circumscribes $K$, and let $ P_{n,N} $ be a random polytope that is generated as in Theorem \ref{geneThm} with the density that minimizes the expectation of the volume difference, which is  $ (\as(K))^{-1}\kappa^{\frac{1}{n+1}}\mathbbm{1}_{\partial K}$. Then
		\begin{equation}\label{noCovexlim}
	\lim_{N\to\infty}\frac{\E[|P_{n,N}\setminus K|]}{|P_{K,N}^b \setminus K|} 
	= 1+O(\tfrac 1n).
	\end{equation}
\end{corollary}

Our last result is a lower bound for the  functional
\[
F(n,N) := \max_{P_{n,N} \subset \R^{n} \text {has at most $N$ vertices}} |P_{n,N}|\cdot|P^{s(P_{n,N})}_{n,N}|,
\] where $s(P_{n,N})  $ denotes the Santal\'o point of the polytope $ P_{n,N} $ and  $ P^{s(P_{n,N})}_{n,N} $ is the polar body of $P_{n,N}$ with respect to $s(P_{n,N})$ (when the Santal\'o point is the origin, we use the notation of $K^\circ$ to denote the polar body of $K$). Recently, Alexander, Fradelizi and Zvavitch \cite[Theorem 3.4]{alexander2017polytopes} proved that the maximum of $F(n,N)$  is achieved by a simplicial polytope with precisely $N$ vertices. We use all of the results in this paper to prove the following theorem. 
\begin{theorem}\label{mahler}
	Let $ P_{n,N} $ be the centrally symmetric random polytope
	\[
	P_{n,N} := \conv\{\pm X_i\}_{i=1}^{\frac{N}{2}} \subset B_n, \quad X_1,\ldots,X_{\frac{N}{2}} \stackrel{\text{i.i.d.}}{\sim} \sigma(\Sn).
	\]
	Then there exists an absolute constant $C>0$ such that when $ N \geq C^nn^{0.5n}$, with overwhelming probability it holds that 
	\[
	F(n,N)  \geq |P_{n,N}|\cdot|P_{n,N}^{\circ}| \geq \E[|P_{n,N}|\cdot|P_{n,N}^{\circ}|] + c\NN|B_n|^2 \geq (1 - (1.5\ln n + O(1))\NN)|B_n|^2.
	\]
%
\end{theorem}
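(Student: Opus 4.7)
The natural starting point is the decomposition coming from $P_{n,N} \subset D_n$ and its polar dual $D_n = D_n^{\circ} \subset P_{n,N}^{\circ}$. Setting $a := |D_n \setminus P_{n,N}|$ and $b := |P_{n,N}^{\circ} \setminus D_n|$, both nonnegative, one has
\[
|P_{n,N}|\,|P_{n,N}^{\circ}| \;=\; (|D_n| - a)(|D_n| + b) \;=\; |D_n|^{2} - |D_n|(a - b) - ab,
\]
so the task reduces to controlling $a - b$ and $ab$. The plan is to show, with probability at least $1 - c(n) N^{-1}$, that both $|D_n|(a-b)$ and $ab$ are bounded by $c\ln(n)\NN\,|D_n|^{2}$.

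The mean of $a - b$ is where the improvement from $O(n\NN)$ to $O(\ln(n)\NN)$ must come from. The Sch\"utt--Werner asymptotic for $\E[a]$ (inscribed random polytope, see \cite{schutt2003polytopes,muller1990approximation}) and the B\"or\"oczky--Reitzner formula (\ref{boereitzner1}) for $\E[b]$ (circumscribed random polytope) coincide at leading order when both use the uniform density on $\Sn$: both equal
$\tfrac12 |D_{n-1}|^{-2/(n-1)}\Gamma\bigl(\tfrac{2}{n-1}+1\bigr)|\partial D_n|^{(n+1)/(n-1)}\NN(1+o(1)),$
so the leading $O(n\NN|D_n|)$ contributions cancel in $\E[a-b]$. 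The residual can then be bounded by $C\ln(n)\NN|D_n|$ using the sharper expansions \eqref{delll} for $\dell_{n-1}$ and Theorem \ref{deldiv} for $\divv_{n-1}$, which agree up to a gap of order $\ln(n)$.

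For the fluctuations of $a - b$, I would combine Reitzner's variance bound \cite{reitzner2003random} for inscribed polytopes with its circumscribed analogue of Fodor--Hug--Ziebarth \cite{fodor2016volume} (or, alternatively, apply Theorem \ref{geneThm} specialized to $D_n$), yielding $\var(a),\var(b) \leq c(n) N^{-(1+4/(n-1))}|D_n|^{2}$. By Cauchy--Schwarz the same bound holds for $\var(a-b)$, and Chebyshev's inequality with threshold $t = c\ln(n)\NN|D_n|$ then gives $|a-b-\E[a-b]| \leq t$ with probability at least $1 - c(n)/((\ln n)^{2} N)$, which combined with the mean bound yields $|a - b| \leq C\ln(n)\NN|D_n|$. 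The $ab$ term is simpler: applying Chebyshev separately to $a$ and to $b$ shows $a, b \leq c(n)\NN|D_n|$ with high probability, so $ab \leq c(n)^{2} N^{-4/(n-1)}|D_n|^{2}$, which is dominated by $\ln(n)\NN|D_n|^{2}$ once $N$ is large enough (this is the meaning of ``$N$ large enough'' in the statement).

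The main obstacle is the quantitative cancellation in $|\E[a-b]|$. The leading Sch\"utt--Werner and B\"or\"oczky--Reitzner constants are identical, but extracting the precise $O(\ln(n)\NN)$ rate for the residual requires either tracking the next term in the cap-volume asymptotic expansion on both sides, or --- more cleanly --- comparing against Kur's almost-optimal circumscribed construction \cite{kur2017approximation} and invoking Theorem \ref{deldiv}. Either route genuinely uses every estimate built up in the preceding sections, which is why the theorem is advertised as depending on ``all of the results in this paper.''
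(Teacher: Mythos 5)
Your decomposition $|P_{n,N}|\,|P_{n,N}^{\circ}|=|D_n|^2-|D_n|(a-b)-ab$ is a reasonable start, but the route you sketch diverges from the paper and has two genuine gaps.

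First, every formula you invoke for $\E[a],\var(a),\E[b],\var(b)$ (M\"uller, Sch\"utt--Werner, Reitzner, B\"or\"oczky--Reitzner, Fodor--Hug--Ziebarth) is stated for polytopes generated by \emph{i.i.d.} points or normals, whereas your $P_{n,N}=\conv\{\pm X_i\}_{i=1}^{N/2}$ and its polar have vertices/normals coming in antipodal pairs; the $N$ generators are not independent. You cannot simply plug in those formulas. The paper handles exactly this by passing to $P'_{n,N/2}:=\conv\{X_i\}_{i=1}^{N/2}\subset P_{n,N}$, which \emph{is} an i.i.d.\ ensemble, so M\"uller's mean and Reitzner's variance apply verbatim, and then using the trivial monotonicity $|P_{n,N}|\geq|P'_{n,N/2}|$. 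Your proposal never makes that reduction.

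Second, and more fundamentally, you treat the polar excess $b=|P^{\circ}_{n,N}\setminus D_n|$ probabilistically and then try to extract the $O(\ln n)$ rate from a delicate cancellation in $\E[a-b]$ --- which you yourself flag as ``the main obstacle.'' The paper's key observation is that \emph{no probability is needed on the polar side at all}: $P^{\circ}_{n,N}$ is a polytope circumscribing $D_n$ with (at most) $N$ facets, and the lower-bound half of the proof of Theorem~\ref{deldiv} (the step showing $\divv_{n-1}\geq(2\pi e)^{-1}(n+\ln n)-c'$, which rests on Ludwig's deterministic lower bound for best approximation) gives a \emph{deterministic} inequality
\[
|P^{\circ}_{n,N}|\ \geq\ \Bigl(1+(2\pi e)^{-1}\bigl(n+\ln n+O(1)\bigr)\NN\Bigr)|D_n|
\]
valid for \emph{every} realization of $P_{n,N}$. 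Combined with the Chebyshev lower bound on $|P'_{n,N/2}|$, this yields the theorem by simple multiplication, with no cancellation analysis and no need for circumscribed variance estimates. So the route you propose is both technically blocked (i.i.d.\ mismatch) and more elaborate than necessary; the missing idea is the deterministic polar bound drawn from Theorem~\ref{deldiv}.
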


We conclude this section with a few remarks.

\begin{remark}
	For convex bodies $K$ and $L$ in $\R^n$, the surface area deviation $\triangle_s(K,L)$ is defined by (see, e.g., \cite{Wer15,kur2017approximation})
	\[
	\triangle_s(K,L) := |\partial (K\cup L)| - |\partial (K\cap L)|.
	\]
	Kur \cite{kur2017approximation} showed that the polytopes $P_{n,N}$ of Corollary \ref{mainThm} satisfy $\triangle_s(P_{n,N},B_n) \leq 4n|P_{n,N}\triangle B_n|$. 
This inequality and Corollary \ref{mainThm} together imply that for all sufficiently large $N$, the following ``large deviation" inequality holds:
	\[
	\Pr\left( \triangle_s(B_n,P_{n,N}) > C \cdot \E\left[\triangle_s(B_n,P_{n,N})\right] \right) \leq 2 \exp\left(-c(n)N^{1-\frac{2}{n-1}}\right).
	\]
\end{remark}
\begin{remark}
	In the case of inscribed random polytopes whose vertices are chosen randomly from the boundary of the body $K$, one can use the parallel results of \cite{MaS2,schutt2003polytopes} (with density $\kappa^{\frac{1}{n+1}}\mathbbm{1}_{\partial K}$) to deduce a limit formula like \eqref{noCovexlim} with a convergence rate that is bounded by $ 1+O(n^{-1}\ln n)$ as $N\to\infty$.
\end{remark}
\section{Prior work}

\subsection{Random polytopes} 

There is a rich literature on the statistical properties of inscribed random polytopes with a restricted number of vertices. It would be impossible to list all of the results in this direction, so we will highlight those  which are most relevant to this paper. First, M\"uller \cite{muller1990approximation} showed that when the vertices are chosen uniformly and independently from the boundary of the Euclidean ball, the expectation of the volume difference is optimal up to an absolute constant. Sch\"utt and Werner \cite{schutt2003polytopes} generalized this result to any $ C^2 $ convex body with 
positive Gaussian curvature, and for any continuous, positive density on the boundary of the body. They also derived an explicit formula for the optimal density function that minimizes the expected volume difference over all choices of positive densities, and showed that if the optimal density is chosen, then as the dimension tends to infinity, random approximation is asymptotically as good as best approximation.

From results on the expectation, an immediate question that follows is to investigate higher moments of the volume difference. K\"ufer \cite{kufer1994approximation} proved an inequality for the variance of the volume of a random polytope that is the convex hull of points chosen uniformly and independently from the boundary of the Euclidean ball. Reitzner \cite{reitzner2003random} later extended this result to all $C^2$ convex bodies $K$ with  positive generalized Gaussian curvature by showing that the variance of the volume of a random polytope whose vertices are chosen uniformly and independently from $\partial K$ is at most  $C(K)N^{-(1+\frac{4}{n-1})}$, where $C(K)$ is a positive constant that depends on $K$.  In \cite{reitzner2003random} it was also shown that the variance of the volume of a random polytope whose vertices are chosen uniformly and independently from $ K$  is at most $C(K)N^{-(1+\frac{2}{n+1})}$. Later,  Reitzner \cite{reitzner2005central} also proved a matching  lower bound for the variance of the same order.


Another question that arises is to investigate concentration of the volume  of random polytopes. Vu \cite{vu2005sharp} used a ``boosted" martingale method to prove a sharp concentration inequality for the symmetric volume difference of a smooth convex body $K$ with $C^2$ boundary and $|K|=1$, and a random inscribed polytope $P_{n,N}$ that is the convex hull of $N$ points chosen uniformly and independently from $ K.$  
More specifically, it was shown that there exist positive constants $c_1(K)$ and $c_2(K)$ depending only on $K$ such that for any $\lambda\in\left(0,\frac{1}{4}c_1(K)N^{-\frac{(n-1)(n+3)}{(n+1)(3n+5)}}\right]$,
\begin{align}
\Pr\bigg(\big||K\setminus P_{n,N}|-\E[|K\setminus P_{n,N}|]\big| &\geq \sqrt{c_1(K)\lambda N^{-\left(1+\frac{2}{n+1}\right)}}\bigg)  \nonumber \\
&\leq 2\exp(-\lambda/4)+\exp\left(-c_2(K) N^{\frac{n-1}{3n+5}}\right).{\label{bbb}}
\end{align}Later, Vu \cite{vu2006central}  used a central limit theorem of Reitzner \cite{reitzner2005central} for Poisson point processes and the concentration inequality \eqref{bbb} to prove a central limit theorem for random polytopes inscribed in a $C^2$ convex body with positive curvature.

Although there are numerous results on the statistical properties of random polytopes with a restricted number of vertices,  much less is known about the case of random polytopes with a restricted number of facets. In this direction, B\"or\"oczky and Reitzner \cite{boroczky2004approximation} calculated the expectation of the volume difference of a smooth convex body $K$ and a random circumscribed polytope with $N$ facets. The random polytope was generated as follows: Choose $ N$ i.i.d. random points from the boundary of $K$  with respect to a given density function, and take the intersection of the supporting hyperplanes at these points. In \cite{boroczky2004approximation}, the optimal density that minimizes the expected volume difference was also determined explicitly in terms of $K$. 

Recently, Fodor, Hug, and Ziebarth \cite{fodor2016volume} computed the expectation and the variance of the volume difference of $K$ and the polar  $ P_{n,N}^{\circ}$ of the random polytope from (\ref{bbb})  for a general density on $K$. 
Surprisingly, the estimates for the variance in \cite{reitzner2003random} and  \cite{fodor2016volume} are equal, up to a constant that depends on $ K.$ 

To the best of the authors' knowledge, this paper is the first  to show concentration results for the volume of random polytopes generated by a probability measure which is not necessarily  uniform, and it is also the first to show concentration results for the volume of random polytopes with a restricted number of facets. We believe that this work is a natural extension of \cite{boroczky2004approximation}.

\subsection{The Mahler volume product of polytopes}

Let $K$ be a convex body in $\R^n$. The Santal\'o point $s(K)$ of $K$ is the unique point that satisfies $|K^{s(K)}|=\min_{z\in\intt(K)}|K^z|$, where  $K^z:=(K-z)^\circ$ is the polar body of $K$ with center of polarity $z$ and $\intt(K)$ denotes the interior of $K$. 
The Mahler volume product  of $K$ is then defined by $|K|\cdot|K^{s(K)}|$. The maximizers of the volume product are precisely the ellipsoids; this celebrated result is called the Blaschke-Santal\'o inequality. It was proved for $n\leq 3$ by Blaschke \cite{Blaschke1917,Blaschke1918} and extended to all dimensions by Santal\'o \cite{santalo1949}. The equality conditions were proved by Saint-Raymond \cite{saintraymond1981} in the symmetric case, and by Petty \cite{petty1985} for the general case.  

On the other hand, finding the minimizers of the  volume product is a major open problem in convex geometry, which has attracted considerable interest. {\it Mahler's conjecture} \cite{mahler1} states  that the simplex is the minimizer. It was proven for $n=2$ by Mahler \cite{mahler2}, and Meyer \cite{meyer1991} proved that equality is attained only for triangles. The conjecture remains open for $n\geq 3$. In the case the body $K$ is centrally symmetric, Mahler \cite{mahler1} conjectured that the minimizer is the unit cube, and he proved it in \cite{mahler2} for $n=2$. For $n=3$, the symmetric case was proved in the affirmative by Iriyeh and Shibata \cite{iriyehshibata} (recently, the proof was simplified in \cite{equipartition}). In general dimensions, it was shown by Nazarov et. al. \cite{NPRZ} that the unit cube is a strict local minimizer for the volume product among symmetric bodies. Nevertheless, Mahler's conjecture for symmetric bodies remains open for $n\geq 4$.

Recently, the volume product of polytopes with a fixed number of vertices was considered by Alexander, Fradelizi and Zvavitch \cite{alexander2017polytopes}. They showed that the functional
\[
F(n,N):=\max_{P_{n,N} \subset \R^{n} \text {has at most $N$ vertices}} |P_{n,N}|\cdot|P^{s(P_{n,N})}_{n,N}|,
\]
achieves its maximum for simplicial polytopes. 
In Theorem \ref{mahler}, we use all of the results in this paper, as well as  results of M\"uller \cite{muller1990approximation} and Reitzner \cite{reitzner2003random}, to derive a lower bound for the asymptotic behavior of $ F(n,N)$.  For more background, as  please see Section \ref{resultstable}. The proof of Theorem \ref{mahler} is in Section \ref{mahlerproof1}.

\section{Discussion: Optimality of random polytopes in high dimensions}\label{resultstable}

The following table summarizes known results on the asymptotic behavior of the quantity $$\frac{\E[d(B_n,P_{n,N})]}{ d(B_n,P_{n,N}^b)},$$ 
where $P_{n,N}^b$ is the polytope with $N$ facets or $N$ vertices that is best-approximating with respect to a metric $d$ under a constraint that appears in the first column of the table. The distance $d$ is either the volume difference or Hausdorff metric, and the expectation is taken with respect to the uniform distribution (with the assumption that all the facets or vertices have the same height). In the inscribed and circumscribed cases the height is one, whereas in the arbitrary case the height is chosen to minimize the distance (in expectation).

The values in the table are given for the following types of polytopes: circumscribed with $N$ facets, inscribed with $N$ vertices  and arbitrarily positioned with either $N$ facets or $N$ vertices.
\begin{table}[h]
\centering
	\begin{tabular}{|l|l|l|}
		\hline
		& \multicolumn{1}{|c|}{Volume difference}                                      & \multicolumn{1}{|c|}{Hausdorff metric}                          \\ \hline
		Facets circumscribed           & $1+O(n^{-1})$   \,\,\,\, \cite{boroczky2004approximation,GruberII}, Thm. \ref{randomvsbest}         & $\Omega(1)\ln N$ \,\,\,\,\cite{boroczky2000polytopal,glasauer1996asymptotic,janson1986random} 
		\\ \hline
		Vertices inscribed      & $1+O(n^{-1}\ln n)$ \,\,\,\,\cite{GruberII,MaS1,MaS2,schutt2003polytopes} &            $\Omega(1)\ln N$ \,\,\,\,\cite{boroczky2000polytopal,glasauer1996asymptotic,janson1986random}                \\ \hline
		Arbitrary facets & $\Theta(1)$     \,\,\,\,\cite{kur2017approximation,Lud06}   &  $\Omega(1)\ln N$ \,\,\,\,\cite{boroczky2000polytopal,glasauer1996asymptotic}\\
		\hline
		Arbitrary vertices & $O(1)$  \,\, \cite{Lud06}, \,\, $\Omega(n^{-1})$ \,\, \cite{boroczky2000polytopal}  &  $\Omega(1)\ln N$ \,\,\,\,\cite{boroczky2000polytopal,glasauer1996asymptotic}\\
		\hline
	\end{tabular}
\end{table}

Our main question on the optimality of random polytopes in high dimensions is in the spirit of Corollary \ref{randomvsbest}. In order to formulate it,  we use the main result of Kur \cite{kur2017approximation}, where he found the arbitrarily positioned random polytopes with $N$ facets that minimize the expectation of the symmetric volume difference with the Euclidean ball. Moreover, these random polytopes are optimal up to absolute constants. Namely, by \cite[Theorem 2]{Lud06} and \cite[Remark 2.2]{kur2017approximation}, for all $N \geq 10^n$ it holds that
\[
	c\NN|B_n| \leq |P^b_{n,N} \triangle B_n| \leq \E[|P_{n,N} \triangle B_n|] \leq C\NN|B_n|,
\]
where $P^b_{n,N}$ is the best-approximating polytope with $N$ facets under the constraint that its facets have roughly the same height\footnote{We mean that for every $n \geq 3, N > 10^{n}$ there exists a polytope $P_{n,N}^h$ in $\R^n$ with $N$ facets that all have (roughly) the same height $h$, which satisfies $|P^b_{n,N}|-|P^h_{n,N}| = o(1)\NN$.} (see Corollary~\ref{mainThm}). Thus, it is natural to ask if the following formula holds:
\begin{equation}{\label{Eq:Conj}}
	\lim_{N\to\infty}\frac{\E[|P_{n,N} \triangle B_n|]}{|P^b_{n,N} \triangle B_n|} = 1+o(1).
\end{equation}
\noindent{In other words, Eq. \eqref{Eq:Conj} asks if random polytopes become ``more optimal" as the dimension increases.}

We strongly believe that Eq. \eqref{Eq:Conj} holds. For example, these random polytopes satisfy the property that as $N$ increases, half of their surface area is outside, which is a condition that best-approximating polytopes must satisfy (as was observed in \cite[Lemma 9]{Lud06}).

Let $X_1,\ldots,X_N\stackrel{\text{i.i.d.}}{\sim} \sigma$ and $r_{\gamma} :=  {\sqrt{1-\left(\frac{\gamma|\partial B_n|}{N|B_{n-1}|}\right)^{\frac{2}{n-1}}}}$ for any fixed $\gamma \in (0,\infty).$ By Eqs. (\ref{Eq:refine}) and (\ref{Eq:refine2}) below, the quantity 
\begin{equation}\label{quantity1}
     \left|\E\left[\int_{\mathbb{S}^{n-1}}\mathbbm{1}_{\{\max\langle X_{i},x\rangle\leq r_\gamma\}}(x)\,d\sigma(x)\right] - \max_{x_1,\ldots,x_N}\int_{\mathbb{S}^{n-1}}\mathbbm{1}_{\{\max\langle x_{i},x\rangle\leq r_\gamma\}}(x)\,d\sigma(x)\right|
\end{equation}
plays a key role in understanding the question \eqref{Eq:Conj}. This quantity also has a beautiful interpretation that is connected to the ``geometric balls and bins problem", which we define in the next section. Finally, we conjecture that if for every fixed $\gamma \in (0,\infty)$ the quantity \eqref{quantity1} is of the order $o(1)$, then Eq. \eqref{Eq:Conj} holds.


\section{Geometric generalization of the ``balls and bins" problem}{\label{sphercovering}}

It turns out that the optimality of random polytopes is closely connected to the optimality of random partial sphere coverings. As we shall see, the latter problem is the geometric generalization of the famous ``balls and bins" problem from probability, which we now recall.

Assume that we have $\lceil \alpha N \rceil$ balls ($\alpha \in (0,\infty)$) and $N$ bins. For each ball, we draw a bin uniformly and independently, and we place the ball inside of this bin. 
\begin{enumerate}
	\item By the linearity of expectation, a proportion of $1-e^{-\alpha} + 0.5\alpha e^{-\alpha}N^{-1} + O( c(\alpha)N^{-2})$ of the bins are occupied.
	\item Trivially, when $\alpha=1$ the best configuration of balls places one ball in one bin, i.e., all the bins are occupied.
	\item  If we have $CN\ln N$ balls that are drawn uniformly and independently, where $C>1$ is an absolute constant, then with high probability all of the bins will be full (provided $C$ is large enough).
\end{enumerate}

Next, we can think about the natural and equivalent ``continuous" question. Namely, we replace the $N$ bins by the interval $[0,1]/\{0 \sim 1\} =: \mathbb{S}^{1}$, and the $\lceil \alpha N \rceil$ balls by $\lceil \alpha N \rceil$ subintervals of length $N^{-1}$. Then, we draw a point uniformly and independently from $\mathbb{S}^{1}$ and  place a subinterval with length $N^{-1}$ centered at the drawn point. This ``continuous" version of the balls and bins problem has the same properties as the original: 
\begin{enumerate}
	\item By Fubini's Theorem, the expectation of the length of the union of the subintervals equals $1-e^{-\alpha} + 0.5\alpha e^{-\alpha}N^{-1} + O( c(\alpha)N^{-2})$ (see Eq. \eqref{indepRV} below).
	\item Trivially, when $\alpha=1$ the best configuration places $N$ subintervals end-to-end, and the length of their union is $1$.
	\item By \cite[Theorem 1.2]{janson1986random}, if we have $CN\ln N$ disjoint intervals with length $N^{-1}$, then with high probability their union will be the entire sphere $\mathbb{S}^{1}$.
\end{enumerate}

The  balls and bins problem for $\Bbb{S}^1$ can be extended naturally to the sphere $\Sn$ of  arbitrary dimension $n$ by replacing the subintervals of length $N^{-1}$ with geodesic balls of normalized surface measure $N^{-1}$. In this setting, the geometry of the sphere starts to have an effect. 
For example, (up to a set of measure zero) we cannot have $N$ disjoint geodesic balls of volume $N^{-1}$ when $n \geq 3$. Therefore, some of the aforementioned properties may change.
\begin{enumerate}
	\item Remarkably, the expectation of the union of the geodesic balls is dimension-free, and it equals $1-e^{-\alpha} + 0.5\alpha e^{-\alpha }N^{-1} + O( c(\alpha) N^{-2})$ in all dimensions  (see Eq. \eqref{indepRV} below).
	\item  Erd{\H{o}}s, Few and Rogers \cite{erdHos1964amount} essentially proved that when $n$ and $N$ are large enough, the best configuration of $\lceil\alpha N\rceil$ (for $\alpha \in (0.99,1.01)$) caps captures roughly $90\%$ of the measure of the sphere. Thus, in high dimensions the overlap is significant.
	\item  The minimal number of caps (with volume $N^{-1}$) required to cover the entire sphere is at most $Cn\ln n \cdot N$ (\cite{rogers1957note,boroczky2003covering}) and at least $cn\cdot N$ (\cite{rogers1963covering}). 
	\item By \cite[Theorem 1.2]{janson1986random}, if we have $C(n)N\ln N$ disjoint balls of measure $N^{-1}$ (where $C(n)$ grows exponentially fast in $n$), then with high probability their union will be the entire sphere $\mathbb{S}^{n-1}$. 
\end{enumerate}

In some sense, Items 1 and 2 are remarkable. Item 1 states that, amazingly, the expected measure of a random partial covering is invariant to the geometry of the sphere. Item 2 states that in high dimensions, the geometry of the sphere causes an ``intrinsic'' overlap of the balls that cannot be avoided, and is independent of the randomness of the points (unlike the case of $\mathbb{S}^{1}$).  Intuitively, one should expect that this overlap will also reduce the expectation as well. From this discussion, the following questions can be raised:

\begin{enumerate}
	\item What is the variance (or a sharp concentration inequality) for the measure of the union of the geodesic balls?
	\item Does the measure of the overlap increase as the dimension grows? Formally, for any fixed $\alpha \in (0,\infty)$ define the sequence   
	\[
C_{n,\alpha}:=\lim_{N\to\infty}\max_{x_1,\ldots,x_{\lceil \alpha N \rceil}}\sigma(\cup_{i=1}^{\lceil \alpha N \rceil}B_G(x_i,N^{-1})), \quad n\in\Bbb N,
	\]
	where $B_G(x_i,N^{-1})$ is the geodesic ball with center $x_i$ and normalized surface measure $N^{-1}$. Is this sequence decreasing in $n$ for every fixed $\alpha$?
\item Is the expectation of the random partial covering optimal as the dimension tends to infinity? More specifically, does it hold that\footnote{From personal communication, Prof. Alexey Glazyrin has also observed this phenomenon.}
\[
\lim_{n \to \infty}C_{n,\alpha} = 1-e^{-\alpha}
\]	
for every fixed $\alpha \in (0,\infty)$?
	
\end{enumerate} 
For the first question, we  give a sharp estimate for the variance, which implies that the deviations decrease as the dimension grows. Also, an immediate use of McDiarmid's inequality yields a concentration inequality. We summarize these results in the next theorem.
\begin{theorem}\label{partialthm}
	Choose $X_1,\ldots,X_{\lceil \alpha N \rceil} \stackrel{\text{i.i.d.}}{\sim} \sigma(\Sn)$,  let $\mathbf{X}_{\alpha N}:=\{X_1,\ldots,X_{\lceil \alpha N \rceil}\}$, and define 
\[
\mathbb{V}(\mathbf{X}_{\alpha N}) :=  \sigma(\cup_{i=1}^{\lceil \alpha N \rceil}B_G(X_i,N^{-1})).
\]
	Assume that $ N \geq C^{n}$. Then for all $\alpha \in (0,\infty)$,
	 \[
	 	e^{-C_1\alpha} c_1^{n-1}N^{-1} \leq \var(\mathbb{V}(\mathbf{X}_{\alpha N})) \leq e^{-\alpha} c_2^{n-1}N^{-1},
	 \]
	  where $c_1,c_2 \in (0,1)$ and $C_1 \in (1,2)$. Moreover, for any $ N \in \mathbb{N}$ and $\epsilon>0$, it holds that 
	 \[
	 	\Pr(|\mathbb{V}(\mathbf{X}_{\alpha N}) - (1-e^{-\alpha})| \geq \epsilon) \leq 2\exp\left(-2\lfloor\alpha^{-1}\rfloor N\epsilon^2\right).
	 \]
\end{theorem}
\noindent{Unfortunately, our concentration inequality is ``dimension-free'', and we don't know how to utilize the dimension in order to achieve the optimal dimensional constant in the exponent. However, our variance estimate utilizes the dimension, and shows that the variance decreases as the dimension grows.}

We don't know how to answer questions 2 and 3 above, which, in our opinion are fundamental. We believe that the answers to these questions will help provide an understanding of the behavior of volumes in high dimensions.  We conclude this section with a conjecture and a question.
\begin{conj}\label{mainconj1}
	For any $\alpha \in (0,\infty)$, the sequence $\{C_{n,\alpha}\}_{n \in \mathbb{N}}$ is decreasing in $n$. Moreover, for every $ \alpha \in (0,\infty)$,
	\[	
		\lim_{n \to \infty}C_{n,\alpha} = \lim_{n \to \infty}\lim_{N \to \infty}\mathbb{V}(\mathbf{X}_{\alpha N}) = 1-e^{-\alpha}.
	\]
\end{conj}
Note that if Conjecture \ref{mainconj1}  holds (say, for $\alpha=1$), then it gives a remarkable geometric definition of  Euler's number $e$, namely,
\begin{equation}\label{eulerconj}
1-e^{-1} = \lim_{n\to\infty}\lim_{N\to\infty}\max_{x_1,\ldots,x_N}\sigma\left(\cup_{i=1}^N B_G(x_i,N^{-1})\right).
\end{equation}
Finally, we mention a perhaps simpler question, which is to utilize the dimension to find a one-sided concentration inequality that improves as the dimension increases, i.e., as $n\to\infty$ the probability of capturing surface area decreases. 
\begin{ques}
		For any $\alpha \in (0,\infty)$ and any $\epsilon > C(n)/\sqrt{N}$, does it hold that
		\[
			\Pr( \mathbb{V}(\mathbf{X}_{\alpha N}) - (1-e^{-\alpha}) \geq \epsilon) \leq 2\exp\left(-C_1(\alpha)C(n)N\epsilon^2\right)?
		\]
\end{ques}

\subsection*{Acknowledgements}
		The authors would like to thank the Mathematical Sciences Research Institute (MSRI) for its hospitality. It was during the authors' visit there when significant progress was achieved. The second-named author wants to thank Prof. Emanuel Milman for his useful suggestions, and also his great advisor Prof. Boaz Nadler and Prof. Yuval Filmus. The authors would also like to thank Prof. Monika Ludwig, Prof. Carsten Sch\"utt and Prof. Elisabeth Werner for the enlightening discussions and insightful suggestions.
		 Finally, we would like to thank the anonymous referees for  thoroughly reading this article, and for their helpful comments and corrections.
		

\subsection*{Background and notation}

Here we provide some background information on convex sets and sphere coverings, and we fix the notation that will be used throughout the paper. 

\vspace{2mm}

The $n$-dimensional volume of a compact set $K\subset \R^n$ is denoted $|K|$. The boundary of $K$ is denoted $\partial K$, and the surface area of $K$ is $|\partial K|$. The Gaussian curvature of $K$ at $x\in\partial K$ is denoted $\kappa(x)$. 
The polar of $K$ is the set $K^\circ:=\{x\in\R^n: \langle x,y\rangle \leq 1, \,\forall y\in K\}$. 

The Euclidean unit ball in $\mathbb R^n$ centered at the origin is denoted by $B_n=\{x\in\R^n: \|x\|\leq 1\}$, where $\|x\|= (\sum_{i=1}^n x_i^2)^{1/2}$ is the Euclidean norm of $x=(x_1,\ldots,x_n)\in\R^n$. The boundary of $B_n$ is the unit sphere $\Sp=\{x\in\R^n: \|x\|=1\}$, and $\sigma$ denotes the uniform probability measure on $\Sp $, i.e., $\sigma(A) = \frac{|A|}{|\partial B_n|}$ for any Borel set $A\subset\Sp$. The volume of the unit ball is $|B_n| = \frac{\pi^{n/2}}{\Gamma(\frac{n}{2}+1)}$, where $\Gamma(x)=\int_0^\infty t^{x-1}e^{-t}\,dt$ denotes the gamma function. In particular, the volume and surface area of the unit ball satisfy the cone-volume formula $|\partial B_n|=n|B_n|$. For more information on convex sets, see, e.g., the monograph of Schneider \cite{SchneiderBook}.

\vspace{2mm}

Let $ K$ be a $ C^2 $ convex body with  positive generalized Gaussian curvature. Consider the metric space $(\partial K, d_G)$, where $d_G$ denotes the geodesic distance on $\partial K$. We shall use the notation $B_G(x,r):=\{y\in\partial K: d_G(x,y)\leq r\}$ to denote the geodesic ball with center $x$ and radius $r$. A finite subset $\mathcal N\subset \partial K$ is called a {\it $\delta$-net} of $\partial K$ if for every $x\in \partial K$ there exists $y\in \mathcal N$ such that $d_G(x,y) \leq \delta$. A $\delta$-net $\mathcal N$ of $\partial K$ is also called a {\it covering} of $\partial K$  since $\cup_{x\in\mathcal N}B_G(x,\delta)\supset \partial K$. We use the term ``covering" in quotes to mean a partial covering, i.e.,  the union of the corresponding geodesic balls is a proper subset of $\partial K$. When $ K = B_n $, a $\delta$-net of $\Sp$ is also called a {\it sphere covering}. 
For more information on sphere coverings, see, e.g., the monograph of B\"or\"oczky \cite{boroczky2004finite}. 

\vspace{2mm}

Throughout the paper, $c,c_1,C,c_0,c_1,c_2,\ldots$ will denote positive absolute constants that may change from line to line. The dependence of a positive constant on  the dimension $n$ or a convex body $K$ or a probability measure $\mu$ will always be stated explicitly as  $ c(n),C(n),c_0(n),c_1(n),\ldots$ or $ c(K), C(K), c_1(K)\ldots $ or $c(\mu), c(\mu,n),\ldots$, respectively; moreover, these constants  may also change from line to line. Finally, please note that all instances of the asymptotic notations $O(\cdot),o(\cdot)$ are meant with respect to the dimension $n$ only.

\section{Auxiliary lemmas}

The following is the classical McDiarmid's inequality \cite{McDiarmid1989}. It is used in the proofs of Theorem \ref{geneThm}, Corollary \ref{mainThm} and Theorem  \ref{partialthm}.
\begin{lemma}[McDiarmid's inequality]
	Let $ X_1,\ldots,X_m $ be independent random variables all taking values in the set $ \mathcal{X}.$ Further, let $ f:\mathcal{X}^m\to \R $ be a
	function of $ X_1,\ldots,X_m $ such that for $1\leq i\leq m$, 
	\begin{align}\label{boundeddiff}
	\sup_{x_1,\ldots,x_m,x_i'\in\mathcal{X}}|f(x_1,\ldots,x_i,\ldots,x_m) - f(x_1,\ldots,x'_i,\ldots,x_m)| \leq c_i .
	\end{align}
	Then
	\[
	\Pr(|f(X_1,\ldots,X_m)-\E[f]| \geq \epsilon) \leq 2\exp\left(-\frac{2\epsilon^2}{\sum_{i=1}^{m}c_i^2}\right).
	\]
\end{lemma}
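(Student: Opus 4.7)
The plan is to prove McDiarmid's inequality by the classical martingale method, combining the Doob martingale decomposition with Hoeffding's lemma and a Chernoff-style exponential bound. I expect no serious obstacle beyond the standard conditioning argument that shows the martingale differences have bounded range.

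First, I would introduce the Doob martingale associated with $f$. Set
\[
Z_0 := \E[f(X_1,\ldots,X_m)], \qquad Z_i := \E[f(X_1,\ldots,X_m)\mid X_1,\ldots,X_i] \quad (1\leq i\leq m),
\]
so that $Z_m = f(X_1,\ldots,X_m)$ and $(Z_i)_{i=0}^m$ is a martingale with respect to the filtration generated by $X_1,\ldots,X_m$. Writing $D_i := Z_i-Z_{i-1}$, the deviation $f-\E[f] = \sum_{i=1}^m D_i$ is a sum of martingale differences, and the problem reduces to controlling $\Pr(\sum_i D_i \geq \epsilon)$ and the symmetric tail.

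Next, using the independence of the $X_j$, I would show that conditional on $X_1,\ldots,X_{i-1}$, the random variable $D_i$ takes values in an interval of length at most $c_i$. Explicitly, define
\[
A_i(x_1,\ldots,x_{i-1},y) := \E[f(x_1,\ldots,x_{i-1},y,X_{i+1},\ldots,X_m)],
\]
so that $Z_i = A_i(X_1,\ldots,X_i)$ and $Z_{i-1} = \E[A_i(X_1,\ldots,X_{i-1},X_i)\mid X_1,\ldots,X_{i-1}]$. The bounded-differences hypothesis \eqref{boundeddiff} then yields $\sup_{y}A_i(\cdot,y) - \inf_{y}A_i(\cdot,y) \leq c_i$, and hence $D_i$ lies almost surely in an interval of length $c_i$ (conditionally on the past) with conditional mean zero.

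Then I would apply Hoeffding's lemma in its conditional form: if a random variable $W$ lies in $[a,b]$ with $\E[W]=0$, then $\E[e^{sW}]\leq \exp\bigl(s^2(b-a)^2/8\bigr)$ for every $s\in\R$. Applying this to $D_i$ conditioned on $X_1,\ldots,X_{i-1}$ and iterating via the tower property gives
\[
\E\bigl[\exp(s(f-\E[f]))\bigr] = \E\Bigl[\exp\bigl(s\textstyle\sum_{i=1}^m D_i\bigr)\Bigr] \leq \exp\!\Bigl(\tfrac{s^2}{8}\textstyle\sum_{i=1}^m c_i^2\Bigr).
\]

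Finally, Markov's inequality $\Pr(f-\E[f]\geq \epsilon)\leq e^{-s\epsilon}\E[e^{s(f-\E[f])}]$ optimized at $s = 4\epsilon/\sum c_i^2$ yields the one-sided bound $\exp\bigl(-2\epsilon^2/\sum_i c_i^2\bigr)$. Applying the same argument to $-f$ gives the other tail, and a union bound produces the factor of $2$ and concludes the proof. The only technically delicate point is verifying that the conditional range of $D_i$ is indeed bounded by $c_i$ when $f$ is only assumed to satisfy \eqref{boundeddiff} pointwise; this is where the product structure of the distribution of $(X_1,\ldots,X_m)$ (i.e., independence) is used crucially, since it allows pulling the expectation over $X_{i+1},\ldots,X_m$ inside the supremum/infimum.
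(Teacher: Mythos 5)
The paper does not prove this lemma; it cites it as classical from McDiarmid (1989). Your proof is the standard (and essentially McDiarmid's own) argument via the Doob martingale, conditional Hoeffding's lemma, and Chernoff bound, and it is correct: the key observation that the conditional range of $D_i$ is bounded by $c_i$ (using independence to push the expectation over $X_{i+1},\ldots,X_m$ inside the sup/inf) is stated and justified properly, the optimization $s = 4\epsilon/\sum_i c_i^2$ gives the stated exponent, and the union bound supplies the factor of $2$.
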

\vspace{2mm}

We will also need Stirling's inequality (see, e.g., \cite{artin}).

\begin{lemma}[Stirling's inequality]\label{stirlingineq}
	For $x>0$,
	\[x^x e^{-x}\sqrt{2\pi x} < \Gamma(x+1) < x^x e^{-x} e^{\frac{1}{12x}}\sqrt{2\pi x} .\]
\end{lemma}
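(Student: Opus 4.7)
The plan is to prove Stirling's inequality by reducing it to the equivalent logarithmic statement
\[0 < \ln\Gamma(x+1) - \left(x \ln x - x + \tfrac{1}{2}\ln(2\pi x)\right) < \frac{1}{12x},\]
and then exponentiating. The cleanest route is through Binet's second integral formula
\[\ln \Gamma(x) = \left(x - \tfrac{1}{2}\right)\ln x - x + \tfrac{1}{2}\ln(2\pi) + 2\int_0^\infty \frac{\arctan(t/x)}{e^{2\pi t} - 1}\, dt,\]
which I would derive by applying the Abel--Plana summation formula to $\sum_{k \geq 0} \ln(x+k)$ and using the Weierstrass product for $\Gamma$ to fix the additive constant $\tfrac{1}{2}\ln(2\pi)$. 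Multiplying through by $e^{\ln x}$ (equivalently, invoking $\Gamma(x+1) = x\Gamma(x)$) reorganizes the expression into
\[\ln\Gamma(x+1) = x\ln x - x + \tfrac{1}{2}\ln(2\pi x) + \mu(x), \qquad \mu(x) := 2\int_0^\infty \frac{\arctan(t/x)}{e^{2\pi t} - 1}\, dt.\]

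The two bounds on $\mu(x)$ then follow from elementary estimates on $\arctan$. For the lower bound, the integrand is strictly positive whenever $t, x > 0$, so $\mu(x) > 0$, which gives $\Gamma(x+1) > x^x e^{-x}\sqrt{2\pi x}$. For the upper bound, apply the elementary inequality $\arctan(u) < u$ for $u > 0$ to obtain
\[\mu(x) < \frac{2}{x}\int_0^\infty \frac{t}{e^{2\pi t} - 1}\, dt.\]
Substituting $u = 2\pi t$ and using the classical identity $\int_0^\infty \frac{u}{e^u - 1}\, du = \Gamma(2)\zeta(2) = \tfrac{\pi^2}{6}$ evaluates the remaining integral to $\tfrac{1}{24}$, yielding $\mu(x) < \frac{1}{12x}$. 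Exponentiating produces both inequalities simultaneously.

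The main obstacle is establishing Binet's formula cleanly; although it is classical, its derivation requires care with contour manipulations in the Abel--Plana step. An alternative, more elementary route (due to Robbins) avoids Binet entirely: for integer $n \geq 1$, compare the telescoping sum $\ln n! = \sum_{k=1}^{n-1} \ln(k+1)$ to $\int_1^n \ln x\, dx$ via a refined trapezoidal rule, where the per-interval error admits a series expansion whose partial sums are strictly sandwiched between $0$ and $\tfrac{1}{12n}$ by alternating-sign estimates on $\ln\bigl(1 + \tfrac{1}{k}\bigr)$. The extension from integer to real arguments is then handled by the Bohr--Mollerup theorem, using log-convexity of $\Gamma$ (together with $\Gamma(x+1) = x\Gamma(x)$ and $\Gamma(1) = 1$) to interpolate the inequality to all $x > 0$.
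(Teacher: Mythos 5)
The paper does not give its own proof of this lemma; it cites it as a known result from Artin's classical monograph on the gamma function, so there is no in-paper argument to compare against. Your main line of reasoning via Binet's second integral formula is correct and complete. Taking
\[
\mu(x) = 2\int_0^\infty \frac{\arctan(t/x)}{e^{2\pi t}-1}\,dt,
\]
the positivity of the integrand gives $\mu(x)>0$, and the bound $\arctan(u)<u$ together with the substitution $u=2\pi t$ and $\int_0^\infty \frac{u}{e^u-1}\,du=\zeta(2)=\pi^2/6$ correctly yields $\mu(x)<\tfrac{1}{12x}$; exponentiating indeed produces both strict inequalities. This is one of the standard routes to the precise Stirling bounds, and assuming Binet's formula (itself classical, though its derivation is nontrivial) the rest is airtight.

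One caveat on your proposed alternative route: deriving the integer case via Robbins' telescoping argument is fine, but the extension to real $x>0$ ``by the Bohr--Mollerup theorem, using log-convexity to interpolate'' is not as automatic as you suggest. Log-convexity of $\Gamma$ gives interpolation inequalities such as $\Gamma(n+s)\leq \Gamma(n)^{1-s}\Gamma(n+1)^s$, but plugging Stirling bounds at the integers $n$ and $n+1$ into such an interpolation loses precision and does not immediately recover the sharp two-sided $1/(12x)$ error term at non-integer arguments. One would need a more careful argument (for instance, comparing $\ln\Gamma$ to the candidate function and using the functional equation to show the difference is bounded, then invoking a uniqueness/periodicity argument). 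As a sketch this is acceptable since it was offered as an alternative, but if you wanted to make that route rigorous it would require filling a genuine gap. The Binet-formula route you chose as your primary argument avoids the issue entirely.
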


An immediate consequence of Stirling's inequality is the following well-known lemma.

\begin{lemma}\label{stirling}
	\[
	\frac{\sqrt{2\pi}}{\sqrt{n+2}} \leq \frac{|B_n|}{|B_{n-1}|} \leq  \frac{\sqrt{2\pi}}{\sqrt{n}}
	\]
\end{lemma}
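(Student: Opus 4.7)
The plan is to start from the explicit formula $|D_n| = \pi^{n/2}/\Gamma(n/2+1)$ recorded earlier in the paper, which rewrites the ratio as
\[
\frac{|D_n|}{|D_{n-1}|} \;=\; \sqrt{\pi}\,\frac{\Gamma((n+1)/2)}{\Gamma(n/2+1)}.
\]
The lemma is therefore equivalent to the Gamma-function inequality
\[
\sqrt{\tfrac{2}{n+2}} \;\leq\; \frac{\Gamma((n+1)/2)}{\Gamma(n/2+1)} \;\leq\; \sqrt{\tfrac{2}{n}},
\]
and the whole task reduces to controlling a ratio of two consecutive half-integer values of $\Gamma$.

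The cleanest path is through the log-convexity of $\Gamma$ on $(0,\infty)$, which states $\Gamma\!\left((a+b)/2\right)^2\leq\Gamma(a)\Gamma(b)$ for all $a,b>0$. First, take $(a,b)=(n/2,\,n/2+1)$, so that $\Gamma((n+1)/2)^2\leq\Gamma(n/2)\Gamma(n/2+1)=(n/2)\Gamma(n/2)^2$; dividing by $\Gamma(n/2+1)=(n/2)\Gamma(n/2)$ gives immediately the upper bound $\Gamma((n+1)/2)/\Gamma(n/2+1)\leq\sqrt{2/n}$. Next, take $(a,b)=((n+1)/2,\,(n+3)/2)$ and use $\Gamma((n+3)/2)=((n+1)/2)\Gamma((n+1)/2)$ to get $\Gamma(n/2+1)^2\leq ((n+1)/2)\Gamma((n+1)/2)^2$, hence $\Gamma((n+1)/2)/\Gamma(n/2+1)\geq\sqrt{2/(n+1)}\geq\sqrt{2/(n+2)}$. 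This yields both inequalities in two lines and lands on the exact constants claimed.

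An alternative, more aligned with the placement of the lemma just after Stirling's inequality, is to substitute the two-sided Stirling bounds into $\Gamma((n+1)/2)/\Gamma(n/2+1)$ with $x=(n-1)/2$ in the numerator and $x=n/2$ in the denominator. This generates a factor $e^{1/2}(1-1/n)^{(n-1)/2}$ that one controls via the classical estimates $(1-1/n)^n\leq e^{-1}\leq(1-1/n)^{n-1}$, together with the Stirling remainder $e^{1/(6(n-1))}$. The main (and only) obstacle in this second route is to verify that these residual multiplicative errors are indeed small enough to yield the sharp constants $\sqrt{2\pi/n}$ and $\sqrt{2\pi/(n+2)}$ rather than looser variants such as $\sqrt{2\pi/(n-1)}$.

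For the write-up I would therefore adopt the log-convexity proof as the main argument, since it is essentially two applications of $\Gamma((a+b)/2)^2\leq\Gamma(a)\Gamma(b)$ and produces the advertised bounds without any asymptotic book-keeping, while noting in passing that the Stirling-based derivation also works after a small computation.
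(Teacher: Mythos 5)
Your proof is correct, and it takes a genuinely different route from what the paper gestures at. The paper does not actually write out a proof: it presents the bound as ``an immediate consequence of Stirling's inequality'' (the lemma it states just above) and cites Lemma~2.2 of Reisner--Sch\"utt--Werner for the details, so the intended derivation substitutes the two-sided Stirling bound $x^x e^{-x}\sqrt{2\pi x} < \Gamma(x+1) < x^x e^{-x} e^{1/(12x)}\sqrt{2\pi x}$ into the ratio $\sqrt{\pi}\,\Gamma\!\left(\tfrac{n+1}{2}\right)/\Gamma\!\left(\tfrac{n}{2}+1\right)$ and then wrestles the exponential remainders $e^{\pm 1/(12x)}$ and the factors $(1-1/n)^{(n-1)/2}$ into the advertised constants, which you correctly flag as the delicate part of that route. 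By contrast, your argument replaces Stirling entirely with two applications of the log-convexity inequality $\Gamma\!\left(\tfrac{a+b}{2}\right)^2 \le \Gamma(a)\Gamma(b)$ at $(a,b)=(n/2,\,n/2+1)$ and $(a,b)=\left(\tfrac{n+1}{2},\tfrac{n+3}{2}\right)$, combined with the recursion $\Gamma(x+1)=x\Gamma(x)$. This is shorter, exact (no asymptotic bookkeeping or error terms), and in fact strictly sharper on the lower side: you obtain $\Gamma\!\left(\tfrac{n+1}{2}\right)/\Gamma\!\left(\tfrac{n}{2}+1\right)\ge \sqrt{2/(n+1)}$, which is stronger than the stated $\sqrt{2/(n+2)}$. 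The only thing lost by your route is the thematic connection to the Stirling lemma that the paper places immediately before this one; otherwise the log-convexity argument is the cleaner derivation.
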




\vskip .1in

For simplicity, we first prove Corollary \ref{mainThm}. This proof captures the main ideas of the proof of Theorem \ref{geneThm}. Later, in Section \ref{pfthm1}, we show how to formally extend this proof from the ball to all smooth convex bodies with positive curvature in order to obtain Theorem \ref{geneThm}.
 
\section{Proof of Corollary \ref{mainThm}}
We first prove the corollary for the uniform measure on the sphere, i.e., $\mu := \sigma$. After this, we extend the proof to an arbitrary density at the end of this section. 

\vskip .1in

By the definitions of $t_{n,N}$ and $P_{n,N}$, we have $t_{n,N}B_n\subset P_{n,N}$. Using this and Fubini's theorem, we express the volume of the symmetric difference $P_{n,N} \triangle B_n$ as
\begin{equation}\label{Eq:refine}
\begin{aligned}|P_{n,N}\triangle B_n| & =\int_{\R^{n}\setminus B_n}\mathbbm{1}_{\{\max\langle X_{i},x\rangle\leq t_{n,N}\}}(x)\,dx+\int_{B_n\setminus t_{n,N}B_n}(1-\mathbbm{1}_{\{\max\langle X_{i},x\rangle\leq t_{n,N}\}}(x))\,dx\\
& =|\partial B_n|\bigg(\int_{\mathbb{S}^{n-1}}\int_{1}^{\infty}r^{n-1}\mathbbm{1}_{\{\max\langle X_{i},x\rangle\leq r^{-1}t_{n,N}\}}(x)\,dr\,d\sigma(x)\\
& +\int_{\mathbb{S}^{n-1}}\int_{t_{n,N}}^{1}r^{n-1}(1-\mathbbm{1}_{\{\max\langle X_{i},x\rangle\leq r^{-1}t_{n,N}\}}(x))\,dr\,d\sigma(x)\bigg)\\
& =|\partial B_n|\bigg(\int_{t_{n,N}}^{1}r^{n-1}\int_{\Sp}(1-\mathbbm{1}_{\{\max\langle X_{i},x\rangle\leq r^{-1}t_{n,N}\}}(x))\,d\sigma(x)\,dr\\
& +\int_{1}^{1+c_{n,N}}r^{n-1}\int_{\mathbb{S}^{n-1}}\mathbbm{1}_{\{\max\langle X_{i},x\rangle\leq r^{-1}t_{n,N}\}}(x)\,d\sigma(x)\,dr\\
& +\int_{1+c_{n,N}}^{1+\frac{2}{n}}r^{n-1}\int_{\mathbb{S}^{n-1}}\mathbbm{1}_{\{\max\langle X_{i},x\rangle\leq r^{-1}t_{n,N}\}}(x)\,d\sigma(x)\,dr\\
& +\int_{1+\frac{2}{n}}^{\infty}r^{n-1}\int_{\mathbb{S}^{n-1}}\mathbbm{1}_{\{\max\langle X_{i},x\rangle\leq r^{-1}t_{n,N}\}}(x)\,d\sigma(x)\,dr\bigg)\\
& =Y+Z+W.
\end{aligned}
\end{equation}
Here $ c_{n,N}:=c(n)\NN(\ln N)^{\frac{2}{n-1}} $ and $c(n)$ will be defined at the end of Subsection \ref{Znegligiblesection}, and  
\begin{equation}\label{Eq:refine2}
\begin{aligned}Y & :=|\partial B_n|\bigg(\int_{1}^{1+c_{n,N}}r^{n-1}\int_{\mathbb{S}^{n-1}}\mathbbm{1}_{\{\max\langle X_{i},x\rangle\leq r^{-1}t_{n,N}\}}(x)\,d\sigma(x)\,dr\\
& +\int_{t_{n,N}}^{1}r^{n-1}\int_{\mathbb{S}^{n-1}}(1-\mathbbm{1}_{\{\max\langle X_{i},x\rangle\leq r^{-1}t_{n,N}\}}(x))\,d\sigma(x)\,dr\bigg) \\
Z & :=|\partial B_n|\int_{1+c_{n,N}}^{1+\frac{2}{n}}r^{n-1}\int_{\mathbb{S}^{n-1}}\mathbbm{1}_{\{\max\langle X_{i},x\rangle\leq r^{-1}t_{n,N}\}}(x)\,d\sigma(x)\,dr\\
W & :=|\partial B_n|\int_{1+\frac{2}{n}}^{\infty}r^{n-1}\int_{\mathbb{S}^{n-1}}\mathbbm{1}_{\{\max\langle X_{i},x\rangle\leq r^{-1}t_{n,N}\}}(x)\,d\sigma(x)\,dr
=|P_{n,N}\cap((1+\tfrac{2}{n})B_n)^{c}|.
\end{aligned}
\end{equation}

We split the proof of Corollary \ref{mainThm} into three lemmas, one for each random variable $Y$, $Z$ and $W$. First, we apply McDiarmid's inequality 
to  derive a concentration inequality for $Y$. For $Z$, we apply 
a ``pigeonhole principle" to certain  coverings of the sphere to show that $Z$ is negligible with high probability. Finally, we use a standard sphere covering argument to show that $W=0$ with extremely high probability. In the final analysis, we condition on the event that $Z+W$ is extremely small to derive the desired concentration inequality for $| P_{n,N} \triangle B_n|$. 


\subsection{Concentration for the random variable $Y$}

\begin{lemma}\label{Yprop}
	Let $\epsilon>0$. There is a constant $c_1(n)>0$ such that for all sufficiently large $N$,
	\[
	\Pr(|Y-\E[Y]| \geq \epsilon) \leq
	\exp\left(-c_1(n)N^{1+\frac{4}{n-1}}f(N)\epsilon^2\right)
	\]
	where $f(N)=(\ln N)^{-(2+\frac{4}{n-1})}$.
\end{lemma}

\begin{proof}
     We aim to apply McDiarmid's inequality to the  random variables
	\begin{align*}
	Y_1(X_1,\dots,X_N) &:=|P_{n,N}^c\cap(1+c_{n,N})B_n\cap B_n^c| \\
	Y_2(X_1,\dots,X_N) &:=|B_n \setminus P_{n,N} |,
	\end{align*}
	where $Y = \left(|(1+c_{n,N})B_n\cap B_n^c| - Y_1\right) +Y_2$. For all $1\leq i\leq N$, we have 
%
	\begin{align*}
	c_i:&=\sup_{x_1,\ldots,x_N,x'_i}|Y_1(x_1,\ldots,x_i,\ldots ,x_N) - Y_1(x_1,\ldots,x'_i,\ldots x_N)| \\
	&\leq \sup_{x_1,\ldots,x_N,x'_i} |(x_i^\perp)^{-}\cap (1+c_{n,N})B_n|,
	\end{align*}
	where $x_i^\perp$ is the hyperplane orthogonal to $x_i$ that contains $x_i$, $(x_i^\perp)^{-}$ is the halfspace of $x_i^\perp$ that does not contain the origin, and $(x_i^\perp)^{-}\cap (1+c_{n,N})B_n$ is the cap of $(1+c_{n,N})B_n$ with height $1+c_{n,N}-t_{n,N}$ and base $x_i^\perp\cap(1+c_{n,N})B_n$.
	We can bound the volume of each cap by 
	\begin{align*}|(x_{i}^{\perp})^{-}\cap(1+c_{n,N})B_n| & =|B_{n-1}|\int_{t_{n,N}}^{1+c_{n,N}}\left((1+c_{n,N})^{2}-x^{2}\right)^{\frac{n-1}{2}}\,dx\\
	& \leq2^{\frac{n-1}{2}}(1+c_{n,N})^{\frac{n-1}{2}}|B_{n-1}|\int_{t_{n,N}}^{1+c_{n,N}}(1+c_{n,N}-x)^{\frac{n-1}{2}}\,dx\\
	&=c(n)(1+c_{n,N}-t_{n,N})^{\frac{n+1}{2}}\\
	&\leq c_{1}(n)N^{-(1+\frac{2}{n-1})}(\ln N)^{1+\frac{2}{n-1}}.
	\end{align*}
	The last inequality follows since
	\begin{align*}
	1+c_{n,N}-t_{n,N} &\leq 1+c(n)\NN(\ln N)^{\frac{2}{n-1}}-\left[1-\frac{1}{2}\left(1+\frac{c\ln n}{n}\right)\NN\right]\\
	&\leq 2c(n)\NN(\ln N)^{\frac{2}{n-1}}.
	\end{align*}
	Thus, from McDiarmid's inequality it follows that for any $\epsilon > 0$,
	\begin{align}\label{Yconcentration}
	\Pr(|Y_1-\E[Y_1]| \geq \epsilon|B_n|) &\leq 2\exp\left(-\frac{2\epsilon^2}{4 N c(n)^2 N^{-(2+\frac{4}{n-1})}(\ln N)^{2+\frac{4}{n-1}}}\right) \nonumber\\
	&= 2\exp\left(-c_1(n)N^{1+\frac{4}{n-1}}f(N)\epsilon^2\right)
	\end{align}where $f(N)=(\ln N)^{-(2+\frac{4}{n-1})}$.
	The second variable $Y_2$   
	is handled in a similar way, and it can be derived that 
	\begin{equation}\label{concY2}
	\Pr(|Y_2-\E[Y_2]| \geq \epsilon |B_n|) \leq 2\exp\left(-c_2N^{1+\frac{4}{n-1}}\epsilon^2\right).
	\end{equation}
	Finally, we apply a union bound and use Eqs. (\ref{Yconcentration}) and (\ref{concY2}) to derive the desired concentration inequality for $Y$.
\end{proof}


\subsection{The random variable $Z$ is negligible}\label{Znegligiblesection}

First, we mention that McDiarmid's inequality does not provide the desired concentration inequality for $ Z $ because the volume of each geodesic ball is too big. Furthermore, despite the fact that the expectation of $Z$ is extremely small, other standard concentration inequalities  do not yield the desired inequality for $Z$. We  instead use a direct geometric and combinatorial argument to show that $ Z $ is negligible with high probability. 

\begin{lemma}\label{Zprop}
	For all sufficiently large $N$,
	\[\Pr\left(Z \geq N^{-(0.5+\frac{2}{n-1})}\right) \leq \exp\left(-c_2(n)N^{0.5-\frac{2}{n-1}}\ln N\right).\]
\end{lemma}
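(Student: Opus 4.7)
The plan is to show that $Z$ in fact vanishes with the required high probability, which immediately gives the stated tail estimate. The key observation is monotonicity: the family $A_r := \{x\in\Sp : \max_i \langle X_i,x\rangle \leq r^{-1}t_{n,N}\}$ is decreasing in $r$, so $A_r\subseteq A_{1+c_{n,N}}$ for all $r\in[1+c_{n,N}, 1+2/n]$, and the event $\{A_{1+c_{n,N}}=\emptyset\}$ forces $Z=0$. The proof therefore reduces to estimating $\Pr(A_{1+c_{n,N}}\neq\emptyset)$.

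Geometrically, if $x\in A_{1+c_{n,N}}$ then the spherical cap $C(x) := \{y\in\Sp : \langle x,y\rangle>(1+c_{n,N})^{-1}t_{n,N}\}$ contains no $X_i$. A direct Taylor expansion of $\arccos$ using $1-t_{n,N}=\tfrac{1}{2}(1+O(\ln(n)/n))\NN$ and $c_{n,N}=c(n)\NN(\ln N)^{2/(n-1)}$ yields
\[
\rho_{\ast} := \arccos\bigl((1+c_{n,N})^{-1}t_{n,N}\bigr) \geq c_0(n)\,\varepsilon_{n,N}\,(\ln N)^{1/(n-1)}.
\]
Setting $\delta := \rho_\ast/2$ and invoking Lemma \ref{ErodsRogeres} together with the definition (\ref{epsilonN}) of $\varepsilon_{n,N}$, I obtain a $\delta$-net $\mathcal{N}\subset\Sp$ of size $|\mathcal{N}|\leq C_1(n)\,N/\ln N$. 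If $x\in A_{1+c_{n,N}}$ and $y\in\mathcal{N}$ lies within $\delta$ of $x$, then $B_G(y,\delta)\subset C(x)$, and hence $B_G(y,\delta)$ contains no $X_i$. This produces the ``balls and bins'' inclusion $\{A_{1+c_{n,N}}\neq\emptyset\}\subseteq\bigcup_{y\in\mathcal{N}}\{B_G(y,\delta)\cap\{X_1,\ldots,X_N\}=\emptyset\}$.

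To bound each term of the union, I would compute $p := \sigma(B_G(y,\delta))$. By (\ref{epsilonN}) and the lower bound on $\delta$, one gets $p \geq (\delta/\varepsilon_{n,N})^{n-1}/N \geq c_2(n)\ln N/N$, so
\[
\Pr\bigl(B_G(y,\delta)\cap\{X_1,\ldots,X_N\}=\emptyset\bigr) \leq (1-p)^N \leq e^{-Np} \leq N^{-c_2(n)}.
\]
A union bound over $\mathcal{N}$ yields $\Pr(A_{1+c_{n,N}}\neq\emptyset)\leq C_1(n)\,N^{1-c_2(n)}/\ln N$, and on the complementary event one has $Z=0\leq N^{-(0.5+2/(n-1))}$. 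Choosing the constant $c(n)$ in $c_{n,N}$ sufficiently large makes $c_2(n)$ as large as the lemma's exponent demands.

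The main obstacle is that the choice of $c(n)$ is not free. Enlarging $c(n)$ strengthens the Bernoulli estimate here (via a larger $c_2(n)$), but simultaneously inflates the cap width $1+c_{n,N}-t_{n,N}$ controlling the bounded-differences constants in the McDiarmid step of Lemma \ref{Yprop}, weakening the concentration there. A single value of $c(n)$ must be fixed once and for all so that Lemma \ref{Yprop} and the present lemma hold simultaneously, and the dimension-dependent constants in Theorem \ref{mainThm} emerge from this joint compromise.
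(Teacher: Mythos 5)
There is a genuine gap: your argument proves a polynomial tail bound, not the stretched-exponential one the lemma asserts. First, note that the displayed lemma has a sign typo — comparing with the inequality \eqref{Zfinal} in the paper's proof and with the second term of Theorem \ref{mainThm}, the claimed bound is $\exp\bigl(-c_2(n)\ln(N)\,N^{0.5-\frac{2}{n-1}}\bigr)$, a stretched exponential in $N$ (the exponent in the exponent is $+0.5-\frac{2}{n-1}>0$, not the negated version).

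Your reduction to the event $A_{1+c_{n,N}}\neq\emptyset$ and the net argument that follows are geometrically correct, and your computations of $\rho_\ast$, $\delta$, $|\mathcal N|$, and $p=\sigma(B_G(y,\delta))\geq c_2(n)\ln N/N$ check out. But the final union bound $\Pr(A_{1+c_{n,N}}\neq\emptyset)\leq|\mathcal N|(1-p)^N \lesssim C_1(n)N^{1-c_2(n)}/\ln N$ is only polynomial in $1/N$: each single net ball is missed with probability about $N^{-c_2(n)}$, and there are only $O(N/\ln N)$ of them, so nothing better than $N^{-\text{const}(n)}$ can come out of this route, no matter how large you take $c(n)$. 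You cannot recover $\exp(-c_2(n)N^{0.5-\frac{2}{n-1}}\ln N)$ this way, because that would require $c_2(n)$ to grow with $N$, which is incompatible with $c(n)$ being a dimension-only constant — and as you yourself observe, enlarging $c(n)$ also degrades the McDiarmid constants in Lemma \ref{Yprop}.

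The paper's proof sidesteps the union bound by not trying to force $Z=0$. It accepts a nonzero $Z$ and instead shows that the event $Z\geq N^{-(0.5+\frac{2}{n-1})}$ forces \emph{many} net balls (of order $N^{0.5-\frac{2}{n-1}}$ of them) to be simultaneously empty of sample points. Using the bounded-overlap property of the net (B\"or\"oczky's covering, each point counted at most $O(n\ln n)$ times), the union of those many balls has measure bounded below by $\sim N^{-(0.5+\frac{2}{n-1})}$, and the probability that \emph{all} $N$ samples miss a set of that measure is already $\exp(-N^{0.5-\frac{2}{n-1}})$. The combinatorial factor $\binom{|\mathcal N|}{m}$ for choosing the $m\sim N^{0.5-\frac{2}{n-1}}$ empty net points contributes only $\exp(O(m\ln N))$, which is of the same order and is beaten by choosing $c(n)$ large. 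It is this shift — from a single bad net ball to a large collection of bad net balls — that buys the stretched-exponential rate; a union bound over single net balls cannot.
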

\begin{proof}
	Recall that $t_{n,N}:=\sqrt{1-\left(\frac{(\ln 2)|\partial B_n|}{N|B_{n-1}|}\right)^{\frac{2}{n-1}}}$. Since $t_{n,N}<1$,  for all $x\in\Sp$ and all $r> 0$ it holds that 
	\[
	\mathbbm{1}_{\max\langle X_i,x\rangle \leq r^{-1}t_{n,N}} \leq \mathbbm{1}_{\max\langle X_i,x\rangle \leq r^{-1}}.
	\]
	Therefore, 
	\begin{align*} Z &\leq |\partial B_n|\int_{1+c_{n,N}}^{1+\frac{2}{n}}r^{n-1}\int_{\mathbb{S}^{n-1}}\mathbbm{1}_{\{\max\langle X_{i},x\rangle\leq r^{-1}\}}(x)\,d\sigma(x)\,dr\\
	& \leq|\partial B_n|\int_{1+c_{n,N}}^{1+\frac{2}{n}}r^{n-1}\int_{\mathbb{S}^{n-1}}\mathbbm{1}_{\{\max\langle X_{i},x\rangle\leq(1+c_{n,N})^{-1}\}}(x)\,d\sigma(x)\,dr\\
	& =|\partial B_n|\left[\frac{r^{n}}{n}\right]_{1+c_{n,N}}^{1+\frac{2}{n}}\times\int_{\Sp}\mathbbm{1}_{\{\max\langle X_{i},x\rangle\leq(1+c_{n,N})^{-1}\}}(x)\,d\sigma(x)\\
	& \leq3|B_n|\int_{\Sp}\mathbbm{1}_{\{\max\langle X_{i},x\rangle\leq(1+c_{n,N})^{-1}\}}(x)\,d\sigma(x)=\frac{3}{n}\tilde{Z},
	\end{align*}
	where $\tilde Z:=|\partial B_n|\int_{\mathbb{S}^{n-1}}\mathbbm{1}_{\{\max\langle X_{i},x\rangle\leq(1+c_{n,N})^{-1}\}}(x)\,d\sigma(x)$. 
	The random variable $ \tilde Z $ has a geometric meaning: it measures the missing surface area of a random sphere ``covering" with $ N $ random geodesic balls of measure $ c(n)\frac{\ln N}{N}$. Observe that the base of each spherical  cap of the ``covering" 
	\[
	\textstyle \Sp \cap \left(\bigcap_{i=1}^N \{x\in\R^n: \langle X_i,x\rangle \leq (1+c_{n,N})^{-1}\}\right)^c
	\]
	has radius $\sqrt{1-(1+c_{n,N})^{-2}}>  \sqrt{c_{n,N}}$, where we used the inequality $(1+x)^{-2} < 1-x$ for $x\in(0, (\sqrt{5}-1)/2)$. 
	
	To get some intuition, we first show that the expectation of $ \tilde Z $ is extremely small. By  independence and Lemma \ref{stirling},
	\begin{align*}
	\E[\tilde Z] &=\int_{\Sp} \E[\mathbbm{1}_{\max\langle X_i,x\rangle \leq (1+c_{n,N})^{-1}}(x)]\,d\sigma(x)\\
	&=\int_{\Sp} \prod_{i=1}^N\Pr(\langle X_i,x\rangle \leq (1+c_{n,N})^{-1})\,d\sigma(x)\\
	&<\left(1-\frac{c_{n,N}^{\frac{n-1}{2}}|B_{n-1}|}{|\partial B_n|}\right)^N
	\leq \left(1-\frac{c(n)^{\frac{n-1}{2}}\ln N}{\sqrt{2\pi n}N}\right)^N\\
	&\leq \exp\left(-\frac{c(n)^{\frac{n-1}{2}}}{\sqrt{2\pi n}}\cdot\ln N\right)=N^{-c(n)^{\frac{n-1}{2}}/\sqrt{2\pi n}},
	\end{align*}where  we used the elementary inequality $(1-x)^N \leq e^{-Nx}$ for $x\in(0,1)$. 
	At the end of the proof of the lemma, we will choose $ c(n) $ to be large enough so that this expectation is negligible.
	
	Recall that our goal is to show that with high probability, $ \tilde{Z} $ is negligible. We can reduce the problem to the following  random sphere ``covering" of $ c(n)N\ln N$ random geodesic balls with volume $ N^{-1}.$	Let $ \mathbf{X} := \{X_1,\ldots, X_{c(n)N\ln N}\} $ and define 
	\[
	A_{\mathbf X} :=\bigcup_{i=1}^{c(n)N\ln N} B_{G}(X_i,N^{-1})
	\]
	to be the random ``covering" generated by $\mathbf{X}$. We want to estimate the probability of the  event 
	\[
	B :=\{ |A^c_{\mathbf X}| = \tilde Z > |\partial B_n|N^{-(0.5+\frac{2}{n-1})}\}.
	\]
	To do so, we will need a sphere covering such that each point of the sphere is not counted ``too many" times, i.e., each point must not belong to ``too many" balls in the covering. A remarkable result of B\"or\"oczky and Wintsche \cite[Theorem 1.1]{boroczky2003covering} provides such a covering,  showing that there exists a $ \frac{1}{2}N^{-\frac{1}{n-1}}$-net  
	$\mathcal{N}$ of size $400 n\ln n\cdot 2^n N$ 
	such that each point $x\in\Sn$ lies inside of at most $ 400 n\ln n$ caps. 
	
	Next, we use the covering $\mathcal N$ to define the random  set 
	\[S_{\max} := \left\{y\in\mathcal N: \, d_G(X_i,y) \geq \tfrac{1}{2}N^{-\frac{1}{n-1}} , \, \forall 1 \leq i\leq c(n)N\ln N \right\}.\] 
	Observe that if $ y \notin S_{\max} $, then by the triangle inequality $ B_G(y, 2^{-(n-1)}N^{-1}) \subset A_{\mathbf X}$, and hence
	\[
	A_{\mathcal N\setminus S_{\max}} := \bigcup_{y\in\mathcal N\setminus S_{\max}} B_G(y, 2^{-(n-1)}N^{-1}) 
	\subset A_{\mathbf X}.
	\] 
	We claim that this inclusion implies  $ |S_{\max}| \geq 2^{n-1}N^{0.5-\frac{2}{n-1}}$ holds under the event $ B.$ To see this, we prove the contrapositive statement and assume that $|S_{\max}| < 2^{n-1}N^{0.5-\frac{2}{n-1}}$. Then 
	\begin{align*} 
	|A^c_{\mathbf X}| \leq |A^c_{\mathcal N\setminus S_{\max}}|
	\leq |A_{S_{\max}}| 
	&= \big|\bigcup_{ y \in S_{\max}}B_G(y,2^{-(n-1)}N^{-1})\big| \\
	&= |S_{\max}| \cdot 2^{-(n-1)}N^{-1}|\partial B_n| \\
	&< N^{-(0.5+\frac{2}{n-1})}|\partial B_n|,
	\end{align*} 
	so the complementary event $B^c$ holds. 
	Therefore,
	\begin{align}\label{starineq}
	\Pr_{\mathbf X}(B) &\leq \Pr_{\mathbf X}(|S_{\max}| \geq   2^{n-1}N^{0.5-\frac{2}{n-1}}) \nonumber
	\\&= \Pr_{\mathbf X}(\exists S \subset S_{\max}: \, |S| = 2^{n-1}N^{0.5-\frac{2}{n-1}}).
	\end{align}
Since each point of the sphere lies inside of at most $400 n\ln n$ balls of the covering $\mathcal{N}$,
	\begin{align}\label{Asmaxlower}
	|A_{S_{\max}}| = \int_{\Sn}\mathbbm{1}_{A_{S_{\max}}}(x)\,dS(x) 
	&\geq \frac{1}{400 n\ln n}\int_{\Sn}\sum_{y\in S_{\max}}\mathbbm{1}_{B_G(y,2^{-(n-1)}N^{-1})}(x)\,dS(x) \nonumber \\
	&=\frac{1}{400 n\ln n}|S_{\max}|\cdot|B_G(y,2^{-(n-1)}N^{-1})| \nonumber\\
	&\geq\frac{2^nN^{-(0.5+\frac{2}{n-1})}}{400 n\ln n}|\partial B_n|.
	\end{align} 
	Thus, by (\ref{starineq}), (\ref{Asmaxlower}), a union bound and independence, we conclude that
	\begin{equation}\label{Zfinal}
	\begin{aligned}
	\Pr_{\mathbf X}(B) &\leq \Pr_{\mathbf X}(\exists S\subset S_{\max}: |S|=2^{n-1}N^{0.5-\frac{2}{n-1}}) \\
	&\leq \binom{400 n\ln n\cdot2^nN}{ 2^nN^{0.5-\frac{2}{n-1}}}\cdot\Pr_{\mathbf X}(X_{1},\ldots, X_{c(n)N\ln N}\notin A_{S_{\max}})
	\\& \leq \exp\left(c_2(n)(\ln N)N^{0.5-\frac{2}{n-1}}\right)\cdot\left(1-\frac{2^nN^{-(0.5+\frac{2}{n-1})}}{400 n\ln n}\right)^{c(n)N\ln N} 
	\\&\leq \exp\left(c_2(n)(\ln N)N^{0.5-\frac{2}{n-1}}-c(n)c_1(n)(\ln N)N^{0.5-\frac{2}{n-1}}\right).
	\end{aligned}
	\end{equation}
	Choosing $ c(n) $  large enough yields the lemma.
\end{proof}

\subsection{The random variable $W$ equals zero with high probability}

Finally, we turn our attention to the random variable $W=|P_{n,N}\cap ((1+\frac{2}{n})B_n)^c|$. The next lemma implies that $W=0$ with high probability. 

\begin{lemma}{\label{Wconcentration}}
	When $N$ is large enough, the polytope $ P_{n,N} $ lies inside the ball $ (1+\frac{2}{n})B_n $ with probability at least $1-e^{-c_1(n)N}.$
\end{lemma}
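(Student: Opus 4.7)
The plan is to reformulate the containment $P_{n,N}\subset (1+\tfrac{2}{n})D_{n}$ as a random ``covering'' of $\Sp$ and then apply a $\delta$-net plus union-bound argument. Since $P_{n,N}$ is convex and contains the origin (because $t_{n,N}>0$ for large $N$), the event $P_{n,N}\not\subset(1+\tfrac{2}{n})D_n$ is equivalent to the existence of $v\in\Sp$ and $\alpha>1+\tfrac{2}{n}$ with $\alpha v\in P_{n,N}$; unpacking the definition of $P_{n,N}$ gives that this is equivalent (up to a null set) to
\[
\exists\, v\in\Sp:\quad \max_{1\leq i\leq N}\inneri{X_i}{v}\;\leq\;\tau,\qquad \text{where }\tau:=\frac{t_{n,N}}{1+\tfrac{2}{n}}.
\]
In other words, $P_{n,N}\subset(1+\tfrac{2}{n})D_n$ holds iff the random open spherical caps $C_i:=\{v\in\Sp:\inneri{X_i}{v}>\tau\}$ jointly cover the sphere.

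By the asymptotic formula (\ref{tnN}), $t_{n,N}\to 1$ as $N\to\infty$ for each fixed $n$, so for $N$ sufficiently large (depending on $n$) we have $\tau\leq 1-\tfrac{1}{n+2}$. Fix a Euclidean $\delta$-net $\mathcal N\subset\Sp$ of mesh $\delta:=\tfrac{1}{2(n+2)^2}$; the standard volumetric bound gives $|\mathcal N|\leq c_0(n)$. Set $\tau':=\tau+\delta\leq 1-\tfrac{1}{2(n+2)}$. If for every net point $v\in\mathcal N$ there is some $i$ with $\inneri{X_i}{v}>\tau'$, then for any $u\in\Sp$ the nearest net point $v\in\mathcal N$ satisfies $\|u-v\|\leq\delta$, and hence
\[
\inneri{X_i}{u}\;\geq\;\inneri{X_i}{v}-\|X_i\|\|u-v\|\;>\;\tau'-\delta\;=\;\tau,
\]
proving the desired covering. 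Therefore the failure event is contained in $\bigcup_{v\in\mathcal N}\{\forall\,i:\inneri{X_i}{v}\leq \tau'\}$.

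Applying the union bound and the independence of the $X_i$,
\[
\Pr(P_{n,N}\not\subset (1+\tfrac{2}{n})D_n)\;\leq\;|\mathcal N|\,(1-\mu(n))^N\;\leq\;c_0(n)\exp(-\mu(n)N),
\]
where $\mu(n):=\sigma(\{y\in\Sp:\inneri{y}{e_1}>\tau'\})>0$ is a spherical cap measure depending only on $n$; crucially, $\mu(n)$ is uniformly positive in large $N$ because $\tau'$ remains bounded above by $1-\tfrac{1}{2(n+2)}$. Taking any $c_1(n)$ slightly smaller than $\mu(n)$ and $N$ large enough to absorb the prefactor $c_0(n)$ yields the claimed estimate $\Pr(P_{n,N}\subset(1+\tfrac{2}{n})D_n)\geq 1-e^{-c_1(n)N}$.

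The only real subtlety is the quantitative balance between the net mesh $\delta$ and the gap $1-\tau$: one needs $\delta$ small enough that shrinking the caps by $\delta$ still gives a covering, yet $\tau+\delta$ must stay bounded away from $1$ so that the cap measure $\mu(n)$ is uniformly positive in $N$. For each fixed $n$ and all sufficiently large $N$, both conditions are easily met simultaneously, so the argument produces constants that depend on $n$ but not on $N$, as required.
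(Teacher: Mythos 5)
Your argument is correct and follows essentially the same strategy as the paper: reduce the containment $P_{n,N}\subset(1+\tfrac{2}{n})D_n$ to a sphere-covering condition and dispatch it with a deterministic $\delta$-net plus a union bound over net points. The paper organizes this as two lemmas — first that if the random points $\{X_i\}$ form a $\tfrac{1}{\sqrt n}$-geodesic-net then containment holds, then that the random points form such a net with probability $1-e^{-c_1(n)N}$ via a union bound over a deterministic net $\mathcal N_0$ — whereas you fold both steps into one direct cap-covering argument. Your version is in fact slightly more careful at the union-bound step: you enlarge the threshold from $\tau$ to $\tau'=\tau+\delta$ precisely so that failure at some $u\in\Sp$ propagates to failure at the nearest net point, which is the inclusion needed for the union bound to be valid. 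The paper's Lemma \ref{deltanet}, as written, uses a deterministic net of the same mesh $\tfrac{1}{\sqrt n}$ as the target radius, which does not by itself give the containment of events (one needs to halve the mesh or shrink the cap, a standard but necessary adjustment); your $\tau\mapsto\tau'$ bookkeeping handles exactly this. Both arguments produce the same form of bound with $n$-dependent constants, so the result is the same.
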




We break the proof of Lemma \ref{Wconcentration} into two steps. First, we show that if $\mathcal N$ is a $\frac{1}{\sqrt{n}}$-net of the sphere, then the inclusion $P_{n,N} \subset (1+\frac{2}{n})B_n$ holds. In the second step, we show that if the points $X_1,\ldots, X_N$ are chosen uniformly and independently from $\Sp$, then the random set $\{X_1,\ldots,X_N\}$ is a $\frac{1}{\sqrt{n}}$-net of $\Sp$ with probability at least $1-e^{-c_1(n)N}$. 



\begin{lemma}\label{lemma1W}
	Suppose that $\mathcal N=\{X_1,\ldots,X_N\}$ is a $\frac{1}{\sqrt{n}}$-net of $\Sp$. Then $P_{n,N} \subset (1+\frac{2}{n})B_n$.
\end{lemma}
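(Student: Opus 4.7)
The plan is to proceed by contradiction. Suppose there exists $y \in P_{n,N}$ with $\|y\| > 1 + \frac{2}{n}$, and set $u := y/\|y\| \in \mathbb{S}^{n-1}$. Since $\mathcal{N}$ is a $\frac{1}{\sqrt{n}}$-net of $\mathbb{S}^{n-1}$ in the geodesic metric, there is some $X_i \in \mathcal{N}$ with $d_G(u,X_i) \leq \frac{1}{\sqrt{n}}$. Because the geodesic distance on $\mathbb{S}^{n-1}$ is the angle between unit vectors, this means $\langle u, X_i\rangle \geq \cos(1/\sqrt{n})$.

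Next, I would estimate $\langle X_i, y\rangle = \|y\| \langle X_i, u\rangle$ from below. Using the elementary inequality $\cos(\theta) \geq 1 - \theta^2/2$ valid for all real $\theta$, we obtain $\cos(1/\sqrt{n}) \geq 1 - \frac{1}{2n}$. Combined with $\|y\| > 1 + \frac{2}{n}$, this gives
\[
\langle X_i, y \rangle \;\geq\; \left(1+\tfrac{2}{n}\right)\left(1-\tfrac{1}{2n}\right) \;=\; 1 + \tfrac{3}{2n} - \tfrac{1}{n^2} \;>\; 1
\]
for every $n \geq 2$. On the other hand, from definition \eqref{tnN} we have $t_{n,N} = \sqrt{1 - \varepsilon_{n,N}^2} < 1$, so in particular $\langle X_i, y \rangle > t_{n,N}$. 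This contradicts the defining inequality $\langle X_i, y\rangle \leq t_{n,N}$ that must hold for every $y \in P_{n,N}$, completing the argument.

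There is no real obstacle here; the lemma is essentially a straightforward geometric observation, and the only choice to make is how sharply to estimate $\cos(1/\sqrt{n})$ versus the factor $1 + \frac{2}{n}$. The bound $1 + \frac{2}{n}$ in the statement is chosen precisely to leave a comfortable margin above $1 > t_{n,N}$, so the Taylor inequality $\cos(\theta) \geq 1 - \theta^2/2$ is already enough and no finer expansion of $t_{n,N}$ is required. The companion lemma, showing that $\{X_1,\dots,X_N\}$ is a $\frac{1}{\sqrt{n}}$-net with probability at least $1 - e^{-c_1(n)N}$, will presumably be handled by a standard covering number / union bound argument in the next step of the proof of Lemma \ref{Wconcentration}.
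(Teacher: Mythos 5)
Your proof is correct and takes essentially the same route as the paper's: both argue by contradiction from a point of $P_{n,N}$ outside $(1+\tfrac{2}{n})D_n$, invoke the $\tfrac{1}{\sqrt{n}}$-net hypothesis to produce a nearby normal $X_i$, and conclude $\langle X_i, y\rangle > 1 \geq t_{n,N}$. The only cosmetic difference is that you work directly with the geodesic distance and the bound $\cos(1/\sqrt{n}) \geq 1-\tfrac{1}{2n}$, while the paper phrases the same estimate by showing that all $X_i$ must avoid a spherical cap of base radius $>1/\sqrt{n}$ around the violating direction.
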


\begin{proof}
	Suppose by way of contradiction that there exists a point $v\in P_{n,N}$ such that $\|v\| \geq 1+\frac{2}{n}$. Without loss of generality, we may assume that  $v = (1+\frac{2}{n})e_1$. We claim that if $v\in P_{n,N}$, then $P_{n,N}$ has no facet $\{\langle X_i,x\rangle = t_{n,N}\}$ with $\langle X_i, e_1\rangle > 1-\frac{1}{n}$. Otherwise, if such a facet exists then there is an index $j$ such that
	\[
	\langle X_j, v\rangle = \langle X_j, e_1\rangle v_1 
	\geq \left(1-\frac{1}{n}\right)\left(1+\frac{2}{n}\right) = 1+\frac{1}{n}-\frac{2}{n^2} > t_{n,N},
	\]which is a contradiction. Thus, none of the outer normals $X_1,\ldots,X_N$ of the facets of $P_{n,N}$ lie in the cap $ \{x\in\Sp: \langle x, e_1\rangle > 1-\frac{1}{n}\}$. Let $r$ denote the radius of this cap. Then for all $n\geq 2$,
	\[
	r= \sqrt{1-\left(1-\frac{1}{n}\right)^2} = \sqrt{\frac{2}{n}-\frac{1}{n^2}} = \frac{\sqrt{2}}{\sqrt{n}}\cdot\sqrt{1-\frac{1}{2n}} \geq \frac{\sqrt{3}}{\sqrt{2n}} > \frac{1}{\sqrt{n}}.
	\]
	This implies that $\mathcal N$ is not a $\frac{1}{\sqrt{n}}$-net, a contradiction.
\end{proof}

\begin{lemma}\label{deltanet}
	Let $X_1,\ldots,X_N \stackrel{\text{i.i.d.}}{\sim} \sigma$. For all sufficiently large $N$, the set $\mathcal N:=\{X_1,\ldots,X_N\}$ is a $ \frac{1}{\sqrt{n}}$-net of the unit sphere with probability at least $ 1-e^{-c_1(n)N}.$ 
\end{lemma}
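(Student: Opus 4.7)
The plan is to reduce the lemma to a coupon-collector-type union bound on a deterministic auxiliary covering of $\Sp$. First, fix a deterministic $\tfrac{1}{2\sqrt{n}}$-net $\mathcal M = \{Y_1,\ldots,Y_M\} \subset \Sp$ of finite size $M = M(n)$; such a net exists by a standard volumetric argument (e.g., take a maximal $\tfrac{1}{2\sqrt{n}}$-separated subset of $\Sp$, which automatically covers $\Sp$ at scale $\tfrac{1}{2\sqrt{n}}$, and bound its size by the ratio of $|\partial D_n|$ to the surface area of a geodesic cap of radius $\tfrac{1}{4\sqrt{n}}$). One may equivalently invoke the Rogers-type covering bounds cited earlier in the paper.

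The key geometric observation is a two-step triangle inequality: if every cap $B_G(Y_j, \tfrac{1}{2\sqrt{n}})$, $j = 1,\ldots,M$, contains at least one of the points $X_1,\ldots,X_N$, then $\mathcal N$ is a $\tfrac{1}{\sqrt{n}}$-net of $\Sp$. Indeed, for any $x \in \Sp$ one chooses $Y_j \in \mathcal M$ with $d_G(x, Y_j) \leq \tfrac{1}{2\sqrt{n}}$ and some $X_i \in B_G(Y_j, \tfrac{1}{2\sqrt{n}})$, and then $d_G(x, X_i) \leq \tfrac{1}{\sqrt{n}}$ by the triangle inequality. Thus the event $\{\mathcal N \text{ is not a } \tfrac{1}{\sqrt{n}}\text{-net}\}$ is contained in the event that some cap $B_G(Y_j, \tfrac{1}{2\sqrt{n}})$ is missed by all of $X_1, \ldots, X_N$.

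It therefore suffices to estimate the missing-cap probability. Let $p_n := \sigma(B_G(Y_1, \tfrac{1}{2\sqrt{n}})) > 0$, which by rotational invariance depends only on $n$. By independence,
\[
\Pr\bigl(X_i \notin B_G(Y_j, \tfrac{1}{2\sqrt{n}}) \text{ for all } 1 \leq i \leq N\bigr) = (1-p_n)^N \leq e^{-p_n N},
\]
and a union bound over $\mathcal M$ yields
\[
\Pr\bigl(\mathcal N \text{ is not a } \tfrac{1}{\sqrt{n}}\text{-net}\bigr) \leq M \, e^{-p_n N}.
\]
Since $M = M(n)$ and $p_n$ depend only on $n$, for all $N$ large enough (depending on $n$) the right-hand side is bounded by $e^{-(p_n/2) N}$, and we may take $c_1(n) := p_n/2$.

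No serious obstacle is anticipated: the argument is entirely routine once the auxiliary deterministic net is produced, and the net itself is guaranteed by the compactness of $\Sp$. The only minor point requiring care is that both the cap measure $p_n$ and the net size $M(n)$ are allowed to depend on $n$, which is consistent with the statement of the lemma since the constant $c_1(n)$ is permitted to depend on the dimension.
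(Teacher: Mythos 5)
Your proof is correct and follows essentially the same strategy as the paper: reduce to a union bound over a deterministic finite net of $\Sp$ and estimate the probability that a fixed cap is missed by all $N$ samples. There is one small but genuine improvement. The paper takes its auxiliary deterministic net $\mathcal{N}_0$ at the same radius $\frac{1}{\sqrt{n}}$ as the target net and then passes directly from ``$\mathcal N$ is not a $\frac{1}{\sqrt n}$-net'' to ``some $z\in\mathcal N_0$ has $d_G(z,X_i)>\frac{1}{\sqrt n}$ for all $i$.'' That implication does not follow from the triangle inequality at equal radii: if $x$ is a witness with $d_G(x,X_i)>\frac{1}{\sqrt n}$ and $z\in\mathcal N_0$ satisfies $d_G(z,x)\le\frac{1}{\sqrt n}$, one only gets $d_G(z,X_i)>0$. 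Your choice of a $\frac{1}{2\sqrt n}$-net $\mathcal M$ is the right fix: if every cap $B_G(Y_j,\frac{1}{2\sqrt n})$ is hit by some $X_i$, the two-step triangle inequality cleanly certifies the $\frac{1}{\sqrt n}$-net property, and both the cap measure $p_n$ and the net size $M(n)$ remain dimension-dependent constants, which the lemma permits. The rest of your argument — the tail bound $(1-p_n)^N\le e^{-p_n N}$, the union bound over $\mathcal M$, and absorbing $M(n)$ into the exponent for large $N$ — matches the paper's estimate and the desired conclusion $1-e^{-c_1(n)N}$.
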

\begin{proof}
	By definition, $\mathcal N$ is a $\frac{1}{\sqrt{n}}$-net of $\Sp$ if and only if for any $x\in\Sp$ there exists $X_i\in\mathcal N$ such that $d_G(x,X_i)\leq \frac{1}{\sqrt{n}}$. We estimate the probability that $\mathcal N$ is not a $\frac{1}{\sqrt{n}}$-net, which holds if and only if  there exists $z\in\Sp$ such that for all $1\leq i\leq N$ we have $d_G(z,X_i)>\frac{1}{\sqrt{n}}$.  By independence and Lemma \ref{stirling}, for any fixed  $ z\in\Sn $ and all sufficiently large $N$ we obtain
	\begin{align}\label{deltanet1}
	\Pr\left(\forall X_i\in\mathcal{N} :\ d_{G}(X_{i},z)>\frac{1}{2\sqrt{n}}\right) & \leq\left(1-\frac{(\frac{1}{2\sqrt{n}})^{n-1}|B_{n-1}|}{|\partial B_n|}\right)^{N} \nonumber\\
	&\leq \left(1-\frac{1}{(\sqrt{\pi/2})^{n-1}n^{n/2}}\right)^N \nonumber\\
	& \leq e^{-c(n)N}.
	\end{align}
	Now using \cite[Cor. 5.5]{aubrun2017alice}, we can find a (deterministic) $ \frac{1}{2\sqrt{n}}$-net of $ \Sn $ of size \\
	$ c_1 n\ln n\cdot\frac{|\partial B_n|}{|C(x,\frac{1}{2\sqrt{n}})|}$; denote it by $ \mathcal{N}_0.$ Therefore, when $N$ is large enough, we can bound the probability that $\mathcal N$ is not a $\frac{1}{\sqrt{n}}$-net by
	\begin{align*}
	\Pr\bigg(\exists x\in\mathbb{S}^{n-1}\,\text{s.t.}\,\forall X_i\in\mathcal N:\ d_{G}(X_{i},x)&>\frac{1}{\sqrt{n}}\bigg) \\
	& \leq\Pr\left(\exists z\in\mathcal{N}_0\,\,\text{s.t.}\,\forall X_i\in\mathcal N:\ d_{G}(z,X_i)>\frac{1}{2\sqrt{n}}\right)\\
	& \leq|\mathcal{N}_0|\Pr\left(\forall X_i\in\mathcal N:\,d_{G}(z,X_{i})>\frac{1}{2\sqrt{n}}\right)\\
	& \leq c_1(n)e^{-N}\\
	& \leq e^{-c_2(n)N}.
	\end{align*}The first inequality follows from the triangle inequality, the second from a union bound and the third from (\ref{deltanet1}). The claim follows.
\end{proof}


\subsection{Concluding Corollary \ref{mainThm}}\label{proofsectionmainthm}

Finally, we put everything together to prove Corollary \ref{mainThm}. Recall that $|P_{n,N} \triangle B_n| = Y+Z+W.$  We  use  the following ingredients:
\begin{enumerate}
	\item[1.] $ \E[|P_{n,N} \triangle B_n|] = c(n)\NN|B_n|$ (see \cite[Theorem 2.1]{kur2017approximation})
	\item[2.] $ \E[Z] \leq \frac{3}{n}\E[\tilde{Z}] = N^{-c(n)^{\frac{n-1}{2}}/\sqrt{2\pi n}} $
	\item[3.] $ \E[W] = 0$ with probability $ 1-e^{c(n)N} $. 
\end{enumerate}	
By  Lemmas  \ref{Zprop} and \ref{Wconcentration} as well as Items 2 and 3,  
\begin{align*}
\Pr(|Z+W| \geq N^{-(0.5+\frac{2}{n-1})}\pm \E[Z]|W = 0) + \Pr(W \ne 0) &\leq\exp\left(-c_1(n)N^{0.5-\frac{2}{n-1}}\right).
\end{align*}
Conditioning on the event $W=0$ and using Lemma \ref{Yconcentration} again, we derive that
\begin{align*}
&\Pr\left(\big||P_{n,N}\triangle B_n| - \E[|P_{n,N}\triangle B_n|]\big| \geq \epsilon \right) \leq
\\&\Pr\left(\left\{\big||P_{n,N}\triangle B_n| - \E[|P_{n,N}\triangle B_n|]\big| \geq \epsilon\right\} \cup \{W \ne 0\} \right) \leq
\\& \Pr\left(\big|Y - \E[Y]\big| \geq \epsilon| W=0 \right) + \Pr\left(|Z| \geq c_1(n)N^{-(0.5+\frac{2}{n-1})} \pm \E[Z] \big| W=0\right) +\Pr(W \ne 0)
\\& \leq 2\exp\left(-c(n)N^{1+\frac{4}{n-1}}f(N)\epsilon^2\right)+\exp\left(-c_1(n)N^{0.5-\frac{2}{n-1}}\right).
\end{align*}
Observe that for $ \epsilon < N^{-(0.5+\frac{2}{n-1})},$ our concentration inequality doesn't give something meaningful. Thus, we can assume that this inequality holds for all $ \epsilon > 0.$
\qed


\subsection*{Extending to an arbitrary density.}
We now describe how to extend the proof of Corollary \ref{mainThm} from the uniform measure $\sigma$ to any probability measure $\mu$ on $\Sp$ for which there exists a density $g$ such that $d\mu(x) = g(x)\,d\sigma(x)$ and $g(x)>0$ for all $x\in\Sp$. In this case, $|P_{n,N}\triangle B_n| = Y_\mu + Z_\mu + W_\mu$, where 
\begin{align*}Y_{\mu} & :=|\partial B_n|\bigg(\int_{1}^{1+c_{n,\mu,N}}r^{n-1}\int_{\mathbb{S}^{n-1}}\mathbbm{1}_{\{\max\langle X_{i},x\rangle\leq r^{-1}t_{n,N}\}}(x)\,g(x)\,d\sigma(x)\,dr\\
&  +\int_{t_{n,N}}^{1}r^{n-1}\int_{\mathbb{S}^{n-1}}(1-\mathbbm{1}_{\{\max\langle X_{i},x\rangle\leq r^{-1}t_{n,N}\}}(x))\,g(x)\,d\sigma(x)\,dr\bigg)\\ 
Z_{\mu} & :=|\partial B_n|\int_{1+c_{n,\mu,N}}^{1+\frac{2}{n}}r^{n-1}\int_{\mathbb{S}^{n-1}}\mathbbm{1}_{\{\max\langle X_{i},x\rangle\leq r^{-1}t_{n,N}\}}(x)\,g(x)\,d\sigma(x)\,dr\\
W_{\mu} & :=|\partial B_n|\int_{1+\frac{2}{n}}^{\infty}r^{n-1}\int_{\mathbb{S}^{n-1}}\mathbbm{1}_{\{\max\langle X_{i},x\rangle\leq r^{-1}t_{n,N}\}}(x)\,g(x)\,d\sigma(x)\,dr.%
\end{align*}
Here $ c_{n,\mu,N}:=c(n,\mu)\NN(\ln N)^{\frac{2}{n-1}} $ and $ c(n,\mu) $ is a large constant that will be defined later.

Since McDiarmid's inequality holds for any probability measure, it follows that for any $\epsilon>0$,
\[
\Pr(|Y_\mu-\E[Y_\mu]| \geq \epsilon) \leq
\exp\left(-c_1(n)N^{1+\frac{4}{n-1}}f(N)\epsilon^2\right)
\]
where $f(N)=(\ln N)^{-(2+\frac{4}{n-1})}$. Thus, we only need to discuss how to extend the proofs of Lemmas \ref{Zprop} and \ref{Wconcentration} to the random variables $Z_\mu$ and $W_\mu$, respectively.

Following the proof of Lemma \ref{Zprop},  observe that the only time  the probability measure was used was in  (\ref{Zfinal}). Since
\begin{equation}\label{musigma}
\mu(A_{S_{\max}})=\int_{A_{S_{\max}}} g(x)\,d\sigma(x) \geq \sigma(A_{S_{\max}})\min_{x\in\Sp}g(x),
\end{equation}
we can modify \eqref{Zfinal} to get
\begin{align*}\Pr(B)&\leq\binom{400n\ln n\cdot2^{n}N}{2^{n}N^{0.5-\frac{2}{n-1}}}\Pr_{X_{i}\sim\mu}(X_{1},\ldots,X_{c(n,\mu)N\ln N}\not\in A_{S_{\max}})  \\&=\binom{400n\ln n\cdot2^{n}N}{2^{n}N^{0.5-\frac{2}{n-1}}}\left(1-\mu(A_{S_{\max}})\right)^{c(n,\mu)N\ln N}\\
& \leq\binom{400n\ln n\cdot2^{n}N}{2^{n}N^{0.5-\frac{2}{n-1}}}\left(1-\frac{2^{n}N^{-(0.5+\frac{2}{n-1})}\min_{x\in\Sp}g(x)}{Cn\ln n}\right)^{c(n,\mu)N\ln N}\\
& \leq\exp\left(c_2(n)(\ln N)N^{0.5-\frac{2}{n-1}}-c(n,\mu)c(g,\mu)(\ln N)N^{0.5-\frac{2}{n-1}}\right).
\end{align*}
Thus, choosing $c(n,\mu)$ large enough shows that $Z_\mu$ is negligible with high probability.

Similarly, in the proof of Lemma \ref{Wconcentration} we only used the probability measure once, in (\ref{deltanet1}). Thus, by independence and (\ref{musigma}), for any fixed $z\in\Sp$ and all sufficiently large $N$ we derive
\begin{align*}
\Pr_{X_i\sim\mu}\left(\forall X_i\in\mathcal{N} :\ d_{G}(X_{i},z)>\frac{1}{\sqrt{n}}\right) &\leq \left(1-\mu(B_G(X_i,\tfrac{1}{\sqrt{n}}))\right)^{N} \\
&\leq \left(1-\sigma(B_G(X_i,\tfrac{1}{\sqrt{n}}))\cdot\min_{x\in\Sp}g(x)\right)^N\\
&\leq e^{-c(n)c(g)N},
\end{align*}where $c(g)$ is a positive constant that depends only on $g$. The rest of the proof for $W_\mu$ is similar to that of Lemma \ref{Wconcentration}.

\section{Proof of Theorem \ref{deldiv}}
Recall  we aim to show that 
\[
|\divv_{n-1} - (2\pi e)^{-1}(n+\ln n)| =O(1).
\]
First, let us prove that  $\divv_{n-1} \leq (2\pi e)^{-1}(n+\ln n) + c.$  By \cite[Theorem 2.1]{kur2017approximation}, there is a polytope $ P $ with $ N $ facets, all of which have the same height $ t_{n,N} $, such that when $N$ is sufficiently large
\[
|P \triangle B_n| \leq C|B_n|\NN.
\]
Now we inflate the polytope $P$ by a factor of $ t_{n,N}^{-1} $ to get a polytope $\tilde P:=t_{n,N}^{-1}P$ that circumscribes $B_n$ (i.e., $\tilde P\supset B_n$ and each facet of $\tilde P$ touches $\partial B_n$). By the homogeneity of volume,
\begin{align*}
|\tilde{P} \setminus B_n|&=|t_{n,N}^{-1}P| - |B_n|\\
& \leq \left(1+C\NN\right)t_{n,N}^{-n}|B_n|-|B_n|\\
& \leq\frac{1}{2}(n+\ln n+c_{0})|B_n|\NN.
\end{align*}
From this and a result of Gruber \cite[Eq. (4)]{GruberOut}, for all sufficiently large  $N$  we obtain
\begin{align*}
&\phantom{=}\,\,\,\frac{1}{2}\left(1-\frac{1}{n^2}\right)\divv_{n-1}|\partial B_n|^{\frac{n+1}{n-1}}\NN\\
&=\frac{1}{2}\left(n+2\ln n +O\bigg(\frac{(\ln n)^2}{n}\bigg)\right)\divv_{n-1}|B_n|^{\frac{n+1}{n-1}}\NN
\\& \leq |\tilde P\setminus B_n| \leq \frac{1}{2}(n+\ln n+c_0)|B_n|\NN,
\end{align*}which implies that
\begin{align}\label{ballvolume1}
\divv_{n-1} &\leq |B_n|^{-\frac{2}{n-1}}\cdot\frac{n+\ln n+c_0}{n+2\ln n +O\bigg(\frac{(\ln n)^2}{n}\bigg)}\nonumber
\\&=
\frac{n+\ln n+c_0}{n+2\ln n +O\bigg(\frac{(\ln n)^2}{n}\bigg)}\left(\frac{{\pi}^{\frac{n}{2}}}{\Gamma(\frac{n}{2}+1)}\right)^{-\frac{2}{n-1}}.
\end{align}
From Lemma \ref{stirlingineq} (Stirling's inequality) we obtain the estimate
\begin{align}\label{ballvolume2}
\left(\frac{{\pi}^{\frac{n}{2}}}{\Gamma(\frac{n}{2}+1)}\right)^{-\frac{2}{n-1}} &= \pi^{-1}(1+O(\tfrac{1}{n}))\left(\Gamma\left(\frac{n}{2}+1\right)\right)^{\frac{2}{n-1}} \nonumber\\
&=\pi^{-1}(1+O(\tfrac{1}{n}))\left(\sqrt{2\pi\cdot\frac{n}{2}}\left(\frac{n}{2}\right)^{\frac{n}{2}}e^{-\frac{n}{2}}\right)^{\frac{2}{n-1}}\nonumber\\
&=(2\pi e)^{-1}(n+2\ln n+O(1)).
\end{align}
Combining (\ref{ballvolume1}) and (\ref{ballvolume2}) yields
\begin{align}\label{divleq}
\divv_{n-1} &\leq (2\pi e)^{-1}(n+2\ln n+O(1))\cdot\frac{n+\ln n+c_0}{n+2\ln n +O\bigg(\frac{(\ln n)^2}{n}\bigg)} 
\nonumber\\&= (2\pi e)^{-1}(n+2\ln n+O(1))\left(1-\frac{\ln n+O(1)}{n+2\ln n +O\bigg(\frac{(\ln n)^2}{n}\bigg)}\right)
\nonumber\\& = (2\pi e)^{-1}(n+\ln n)+O(1).
\end{align}



\vspace{2mm}

In the other direction, we show that $\divv_{n-1} \geq (2\pi e)^{-1}(n+\ln n) - c_1$, where $ c_1 > 0  $ is an absolute constant that will be defined later. Suppose that there exists a polytope $P_b \supset B_n$ with $N$ facets such that \[ |P_b| \leq \left(1+\frac{1}{2}(n + \ln n- c_1)\NN\right)|B_n|.\]   Without loss of generality, we may assume that all of the facets of $P_b$ touch the unit ball.
Now  shrink $P_b$ so that its volume equals $ |B_n|$, and  denote the resulting shrunken polytope by $ \hat{P_b}.$ Then $\hat{P_b}$ can be represented as 
\[
\hat{P_b} = \bigcap_{i=1}^{N}\{x\in\R^{n}:\ \inneri{y_i}{x} \leq w_{n,N}\},
\] where $y_1,\dots,y_N$ are the normals of the facets of $\hat{P_b}$ and $w_{n,N} \geq t_{n,N} +\frac{c_1}{2n}\NN$. We express the volume of $B_n\setminus \hat{P_b}$ as
\begin{align}\label{eq:a1}
|B_n \setminus \hat{P_b}| &= \left|\cup_{i=1}^{N}\{x\in\R^{n}:\inneri{y_i}{x} > w_{n,N} \} \cap B_n\right|  \nonumber\\
&\leq N|\{x\in\R^{n}:\inneri{y_1}{x} \in (w_{n,N},1]\} \cap B_n| .
\end{align}
By the definitions of $\varepsilon_{n,N}$ and $t_{n,N}$,  we have
\[ 
w_{n,N} >1-\frac{1}{2}\left(\frac{|\partial B_n|}{|B_{n-1}|N}\right)^{\frac{2}{n-1}}+\frac{c_1}{2n}\NN
=1-\frac{1}{2}\varepsilon_{n,N}^2+\frac{c_1}{2n}\NN.
\]
Hence, we can estimate the volume of each cap as
\begin{align}{\label{eq:a2}}
|\{x\in\R^{n}:\inneri{y_1}{x} \in (w_{n,N},1]\} \cap B_n|
&=|B_{n-1}|\int_{w_{n,N}}^{1}\left(1-x^{2}\right)^{\frac{n-1}{2}}dx \nonumber\\
& \leq 2^{\frac{n-1}{2}}|B_{n-1}|\int_{1-\frac{1}{2}\varepsilon_{n,N}^2+\frac{c_1}{2n}\NN}^{1}\left(1-x\right)^{\frac{n-1}{2}}dx\nonumber\\
& =\frac{|B_{n-1}|}{n+1}2^{\frac{n+1}{2}}\bigg(\frac{1}{2}\left(\frac{|\partial B_n|}{|B_{n-1}|N}\right)^{\frac{2}{n-1}}-\frac{c_1}{2n}\NN\bigg)^{\frac{n+1}{2}} \nonumber\\
& \leq\frac{|B_{n-1}|}{n+1}\bigg(\left(1-\frac{c_1}{n}\right)\left(\frac{|\partial B_n|}{|B_{n-1}|N}\right)^{\frac{2}{n-1}}\bigg)^{\frac{n+1}{2}} \nonumber\\
& < 9e^{-c_1/2}|B_n|N^{-1}N^{-\frac{2}{n-1}}.
\end{align}
Please note that in the last inequality, we used Lemma \ref{stirling} and the elementary inequality $\frac{n}{n+1}\sqrt{2\pi n}<9$, $n\geq 2$. Thus, from (\ref{eq:a1}) and (\ref{eq:a2}) we obtain
\[
|B_n \setminus \hat{P_b}| 
\leq 9e^{-c_1/2}|B_n|\NN.
\] 
However, by \cite[Theorem 2]{Lud06} it is known that
\[
|\hat{P_b} \triangle B_n| \geq c|B_n|\NN.
\]
Therefore, when $ c_1 $ is large enough we get
\[
|\hat{P_b}| \geq \left(1+\frac{c}{2}\NN\right)|B_n|,
\]
which contradicts the fact that $ |\hat{P_b}| = |B_n|. $ Hence, for all polytopes $P_{n,N}\supset B_n$ with $N$ facets, when $N$ is large enough we have $|P_{n,N}| > \left(1+\tfrac{1}{2}(n+\ln n- c_1)\NN\right)|B_n|$, i.e., 
\[
|P_{n,N}\setminus B_n| > \left(1+\frac{1}{2}(n+\ln n- c_1)\NN\right)|B_n|\NN. 
\]
Thus, from (\ref{GruberOutEqn}) we get that for all sufficiently large $N$,
\[
\frac{1}{2}\left(1-\frac{1}{n^2}\right)\divv_{n-1}|\partial B_n|^{\frac{n+1}{n-1}}\NN \geq \frac{1}{2}(n+\ln n-c_1)|B_n|\NN.
\]
Finally, another application of Stirling's inequality yields
\begin{align}\label{divgeq}
\divv_{n-1} \geq (2\pi e)^{-1}(n+\ln n) -c_1.
\end{align}
The theorem now follows from \eqref{divleq} and \eqref{divgeq}. \qed
\section{Proof of Theorem \ref{mahler}}\label{mahlerproof1}

First note that it is well-known that a centered convex body has the origin as its Santal\'o point (see, e.g., \cite{SchneiderBook}). Also, recall that $ P_{n,N} = \conv\{\pm X_i\}_{i=1}^{\frac{N}{2}}$. By an argument of M\"uller \cite{muller1990approximation}, it follows  that\footnote{The original proof in \cite{muller1990approximation} is for $\conv\{X_1,\ldots,X_{N}\}$; simple modifications to the arguments there show that the expected volume of $\conv\{\pm X_1,\ldots,\pm X_{N/2}\}$ equals the expected volume of $\conv\{ X_1,\ldots,X_{N}\}$ (up to a negligible factor).}
\[
\E[|P_{n,N}|] = \left(1-2^{-1}\left(n+4\ln n+O(1)\right)N^{-\frac{2}{n-1}}\right)|B_n|.
\]
Furthermore, Reitzner \cite{reitzner2003random} showed that when $ N $  is large enough,
\[
\var(|P_{n,N}|) \leq c(n)N^{-(1+\frac{4}{n-1})}.
\]
Thus, by Chebyshev's inequality the event
\begin{equation}\label{chevy1}
|P_{n,N}| \approx \left(1-2^{-1}(n+ 4\ln n+O(1))N^{-\frac{2}{n-1}}\right)|B_n|  
\end{equation}
holds with probability at least $1-C(n)N^{-1}$, where $\approx$ denotes asymptotic equality up to a factor of $O(1)\NN$. Now the proof of Theorem \ref{deldiv}, specifically the argument showing that $\divv_{n-1} \geq (2\pi e)^{-1}(n+\ln n) - c_1,$ implies that when $ N$ is large enough the following inequality holds for every realization of $ P_{n,N}$: 
\begin{equation}\label{polarlower1}
|P^{\circ}_{n,N}| \approx \left(1+2^{-1}(n+\ln n+O(1))\NN\right)|B_n| .
\end{equation}
Using (\ref{chevy1}) and (\ref{polarlower1}), we conclude that with high probability
\[
F(n,N) \geq \E[|P_{n,N}|\cdot|P^{\circ}_{n,N}|] \approx (1-(1.5\ln n + O(1))\NN)|B_n|^2.
\]
\qed

\section{Proof of Theorem  \ref{partialthm}}\label{spherethm} 

\subsection{Estimating the expectation of a random partial covering}\label{expectationsection}
Here we show that the expectation of the random variable $\mathbb{V}(\mathbf{X}_{\alpha N})$, which is the proportion of the surface area of $\Sn$ that the caps capture, is about $ 1-e^{-\alpha}.$ Let $\epsilon$ be the height of a spherical cap with a surface area $N^{-1}|\partial B_n|$. By Fubini's theorem and the independence of the $X_i$,
\begin{equation}\label{indepRV}
\begin{aligned} \E[\mathbb{V}(\mathbf{X}_{\alpha N})]=\E\left[\int_{\mathbb{S}^{n-1}}(1 -\mathbbm{1}_{\max\langle X_{i},x\rangle\leq 1-\epsilon}(x))\,d\sigma(x)\right]  &=1 -\int_{\mathbb{S}^{n-1}}\prod_{i=1}^{\lceil \alpha N \rceil}\E\big[\mathbbm{1}_{\langle X_{i},x\rangle\leq 1-\epsilon}(x)\big]d\sigma(x)\\
&= 1 - \left(1-\frac{1}{N}\right)^{\lceil \alpha N \rceil}
\\& =1 - e^{\lceil\alpha N\rceil  \ln(1-1/N)}
\\
&=1 - e^{-\alpha} + \frac{1}{2}\alpha e^{-\alpha} N^{-1} +O( \alpha e^{-\alpha} N^{-2}).
\end{aligned}
\end{equation}
Observe that each cap increases $ \mathbb{V}(\mathbf{X}_{\alpha N}) $ by at most $ N^{-1}$. Hence, for all $ 1\leq i \leq \lceil \alpha N \rceil $,
	\[
	c_i:=\sup_{ x_1,\ldots,x_{\lceil \alpha N \rceil},x'_i} | \mathbb{V}(\mathbf{X}_{\alpha N})- \mathbb{V}(\mathbf{X}_{\alpha N}^\prime)| \leq N^{-1}
	\]
	where $\mathbf{X}_{\alpha N}^\prime:=(X_1,\ldots,X_{i-1},X_i^\prime,X_{i+1},\ldots,X_{\lceil \alpha N \rceil})$. Thus, by McDiarmid's inequality, 
	\[
	\Pr(|\mathbb{V}(\mathbf{X}_{\alpha N}) - (1-e^{-\alpha})| > \epsilon) \leq 2\exp\left(-\frac{2\epsilon^2}{\sum_{i=1}^{\lceil \alpha N \rceil}N^{-2}}\right) \leq 2e^{-2\lfloor \alpha^{-1}\rfloor N\epsilon^2}.
	\]
	\subsection{Estimating the variance of a random partial covering}
	For this proof, we use the notation
	$
	    \epsilon=\epsilon(n,N) := \left(\frac{|\partial B_{n}|}{|B_{n-1}|N}\right)^{\frac{1}{n-1}},
	$
	and 
	$
	    C(x,N):= B_G(x,N^{-1})
	$
	denotes the spherical cap with center $x$ and normalized surface area $N^{-1}$. 
	We shall also use the fact that $  (1-\E[\mathbb{V}(\mathbf{X}_{\alpha N})])^2 = e^{-2\alpha}(1+0.5 \alpha N^{-1})^2 =  e^{-2\alpha}(1+\alpha N^{-1}) + O(C(\alpha)N^{-2}) $. 
	By elementary properties of the variance and the linearity of expectation,
	\begin{align}\label{variancecaps}
	    \var(\mathbb{V}(\mathbf{X}_{\alpha N})) 
	    &=\E[(1-\mathbb{V}(\mathbf{X}_{\alpha N}))^2]-(1-\E[\mathbb{V}(\mathbf{X}_{\alpha N})])^2.
	\end{align}
Expanding the product in the first term, we obtain
	\begin{align*}
	    (1-\mathbb{V}(\mathbf{X}_{\alpha N}))^2 &= \left(\int_{\Sn}\mathbbm{1}_{\max\langle X_{i},x\rangle\leq 1-\epsilon}(x)\,d\sigma(x)\right)^2
	    \\&=
	    \int_{\Sn}\int_{\Sn}\prod_{i=1}^{\lceil\alpha N\rceil}\mathbbm{1}_{\langle X_{i},x\rangle\leq 1-\epsilon,\langle X_{i},y\rangle\leq 1-\epsilon}(x)\,d\sigma(y)\,d\sigma(x).
	\end{align*}
By independence and the rotational invariance of the uniform measure, we get
	\begin{align*}
	    \E[ (1-\mathbb{V}(\mathbf{X}_{\alpha N}))^2] &= \int_{\Sn}\int_{\Sn}\Pr( x \notin C(X_i,N),y \notin C(X_i,N))^{\lceil\alpha N\rceil}\,d\sigma(y)\,d\sigma(x) 
	    \\&= \int_{\Sn}\Pr( e_n \notin C(X_i,N),y \notin C(X_i,N))^{\lceil\alpha N\rceil}\,d\sigma(y)
	    \\&= \int_{\Sn}\Pr( X \notin C(y,N), X \notin C(e_n,N))^{\lceil\alpha N\rceil}\,d\sigma(y).
   \end{align*}
The last integrand measures the probability that $e_n$ and $y$ are not inside the same cap centered at $X$. Since $X$ is drawn uniformly from the sphere, this probability equals measure of the union of these two spherical caps; moreover, if $d_G(y,e_n) \geq 2\epsilon$, then it equals the sum of the measures of the two caps. Thus,
    \begin{align}\label{2ndmoment}
        \E[(1-\mathbb{V}(\mathbf{X}_{\alpha N}))^2] &= (1-2^{n-1}N^{-1})(1-2N^{-1})^{\lceil\alpha N\rceil} \nonumber\\
&+ \int_{C(e_n,2^{n-1}N^{-1})}\Pr( X \notin C(y,N), X \notin C(e_n,N))^{\lceil\alpha N\rceil}\,d\sigma(y)  
       \nonumber \\
       & = (1+\alpha N^{-1}-2^{n-1}N^{-1})e^{-2\alpha} \nonumber\\
&+ \underbrace{\int_{C(e_n,2^{n-1}N^{-1})}\left(1 - \frac{| C(y,N) \cup C(e_n,N) |}{|\partial B_n|}\right)^{\lceil\alpha N\rceil}\,d\sigma(y)}_{=:I}.
    \end{align}
    Now when $N$ is large enough, we may assume that the caps are $(n-1)$-dimensional balls with the same radius. Indeed, if their distance is $t < 2\epsilon$, then the measure of the intersection of the two balls equals the measure of two $(n-1)$-balls with height $\epsilon - t/2$ (up to a negligible perturbation). Using polar coordinates, we  estimate the  integral $I$ by
    \begin{align}\label{integralI}
         I &=  \frac{|\partial B_{n-1}|}{|\partial B_n|}\int_{0}^{2\epsilon}t^{n-2}(1-2N^{-1}+\sigma(C(e_n,\epsilon-t/2)))^{\lceil\alpha N\rceil}\, dt
        \nonumber \\& \leq  \frac{|\partial B_{n-1}|}{|\partial B_n|}\int_{4^{-1}\epsilon}^{2\epsilon} t^{n-2}(1-(2+c^{n-1})N^{-1})^{\lceil\alpha N\rceil}\,dt + e^{-\alpha}4^{-(n-1)}N^{-1}
       \nonumber\\& \leq    2^{n-1}N^{-1}(1-(2+ c^{n-1})N^{-1})^{\lceil\alpha N\rceil}+e^{-\alpha}C_1^{n-1}N^{-1} 
       \nonumber\\&\leq 2^{n-1}N^{-1}e^{-2\alpha} + e^{-\alpha}C^{n-1}N^{-1}. 
    \end{align}
  Combining \eqref{2ndmoment} and \eqref{integralI}, we get 
    \begin{equation}\label{2ndmomentupper}
        \E[(1-\mathbb{V}(\mathbf{X}_{\alpha N}))^2] \leq e^{-2\alpha}(1+\alpha N^{-1}) + e^{-\alpha}C^{n-1}N^{-1}. 
    \end{equation}
    Putting everything together, we use \eqref{variancecaps} and \eqref{2ndmomentupper} to derive 
    \begin{align*}
\var(\mathbb{V}(\mathbf{X}_{\alpha N}))
&\leq e^{-2\alpha}(1+\alpha N^{-1}) + e^{-\alpha}C^{n-1}N^{-1}-\left(e^{-\alpha} - \frac{1}{2}\alpha e^{-\alpha} N^{-1} -O( \alpha e^{-\alpha} N^{-2})\right)^2\\
&\leq e^{-\alpha}C^{n-1}N^{-1}.
    \end{align*}
Following a similar analysis, one can also show that
    \[
         \E[(1- \mathbb{V}(\mathbf{X}_{\alpha N}))^2] \geq e^{-2\alpha}(1+\alpha N^{-1}) + c_2^{n-1} e^{-C_2\alpha} N^{-1},
    \]
    where $C_2 \in (1,2)$.
    The claim follows.

\section{Proof of Theorem \ref{geneThm}}\label{pfthm1}
In this section, we show how to modify the proof of Corollary \ref{mainThm}  to extend the result from the Euclidean ball to all smooth convex bodies $K$ with positive curvature. The proof that is given holds for the uniform distribution on the boundary; the extension to arbitrary densities follows from arguments similar to those in the proof of Corollary \ref{mainThm}. 

\vspace{1mm}

First, recall that the random polytope $ P_{n,N} $ is defined by
\[
P_{n,N} := \bigcap_{i=1}^{N}\{x\in\R^n: \inneri{x}{\nu(X_i)} \leq \inneri{X_i}{\nu(X_i)} \}, \hspace{.2in} X_1,\ldots,X_N \stackrel{\text{i.i.d.}}{\sim} \sigma_{\partial K} ,
\]
where $ \sigma_{\partial K} $ denotes the uniform probability measure on the boundary of $ K $. We use a ``polar'' coordinates formula for a convex body with the origin in its interior (see, e.g., \cite{nazarov2003maximal}) to express the volume of the set difference $P_{n,N}\setminus K$ as
\begin{align*}
|P_{n,N}\setminus K| &=\int_{\R^{n} \setminus K} \mathbbm{1}_{\max\inneri{\nu(X_i)}{x} \leq \inneri{\nu(X_i)}{X_i}}(x)\, dx\\
&= |\partial K|\int_{1}^{\infty}r^{n-1}\int_{\partial K}\|y\|\alpha(y)\mathbbm{1}_{\max\inneri{\nu(X_i)}{y}\leq r^{-1}\inneri{\nu(X_i)}{X_i}}(y) \,d\sigma_{\partial K}(y)\,dr,
\end{align*}
where  $ \nu(y) $ is the outer unit normal to $\partial K$ at the point $y$ and $ \alpha(y) $ is the cosine of  the angle between the normal $ \nu(y) $ and the ``radial" vector $ y\in\partial K.$ We now split this integral into three parts as we did in the proof of Corollary \ref{mainThm}:
\begin{align}|P_{n,N}\setminus K| & =|\partial K|\bigg(\int_{1}^{1+c_{K,N}}r^{n-1}\int_{\partial K}\|x\|\alpha(x)\mathbbm{1}_{\max\langle\nu(X_{i}),x\rangle\leq r^{-1}\langle\nu(X_{i}),X_{i}\rangle}(x)\,d\sigma_{\partial K}(x)\,dr\nonumber\\
& +\int_{1+c_{K,N}}^{1+\frac{2}{n}}r^{n-1}\int_{\partial K}\|x\|\alpha(x)\mathbbm{1}_{\max\langle\nu(X_{i}),x\rangle\leq r^{-1}\langle\nu(X_{i}),X_{i}\rangle}(x)\,d\sigma_{\partial K}(x)\,dr\nonumber\\
& +\int_{1+\frac{2}{n}}^{\infty}r^{n-1}\int_{\partial K}\|x\|\alpha(x)\mathbbm{1}_{\max\langle\nu(X_{i}),x\rangle\leq r^{-1}\langle\nu(X_{i}),X_{i}\rangle}(x)\,d\sigma_{\partial K}(x)\,dr\bigg)\nonumber\\
& =Y+Z+W.
\end{align}
Here $ c_{K,N}:=c(K)\NN(\ln N)^{\frac{2}{n-1}} $ and $ c(K) $ is a large constant that is defined at the end of Subsection \ref{sectionZnegligibleK}. As in the proof of Corollary \ref{mainThm}, we will divide the proof into three lemmas, considering each random variable $Y$, $Z$ and $W$ separately. The proofs of these lemmas are similar to those of Lemmas  \ref{Yprop}, \ref{Zprop} and \ref{Wconcentration} for the Euclidean unit ball. The modifications needed to extend the proofs to all smooth convex bodies  involve elementary differential geometry. 



\subsection{Concentration for the random variable $Y$}

\begin{lemma}\label{YpropK}
	Let $\epsilon>0$. There is a constant $c_1(K)>0$ such that for all sufficiently large $N$,
	\[
	\Pr(|Y-\E[Y]| \geq \epsilon) \leq
	2\exp\left(-c_1(K)N^{1+\frac{4}{n-1}}f(N)\epsilon^2\right)
	\]
	where $f(N)=(\ln N)^{-(2+\frac{4}{n-1})}$.
\end{lemma}

\begin{proof}
	
	We follow along the same lines as the proof of Lemma \ref{Yprop}, where we used McDiarmid's inequality to derive a concentration inequality for the random variable $Y$ by analyzing a random partial ``covering" of the sphere by geodesic balls of a fixed radius. However, in the setting of smooth convex bodies with positive curvature, we will instead consider a random partial ``covering" of $\partial K$ by geodesic ellipsoids. Moreover, unlike the sphere ``covering" setting, the  shape of each geodesic ellipsoid can vary depending on the curvature of $K$ at the ellipsoid's center.
	
	\vspace{1mm}
	
	First, we define $ f(X_1,\dots,X_N):=|P_{n,N}^c\cap(1+c_{K,N})K\cap K^c| $ to be the volume that we remove from $ (1+c_{K,N})K \setminus K $, so that $Y=|(1+c_{K,N})K\setminus K| - f(X_1,\ldots,X_N)$. Then for all $1\leq i\leq N$,
	\begin{align*}
	c_i:&=\sup_{x_1,\ldots,x_N,x'_i}|f(x_1,\ldots,x_i,\ldots ,x_N) - f(x_1,\ldots,x'_i,\ldots x_N)| \\
	&\leq \sup_{x_1,\ldots,x_N,x'_i} |(x_i^\perp)^{-}\cap (1+c_{K,N})K|.
	\end{align*}
	
	Since $K$ has $C^2$ boundary with  positive curvature, each point in the boundary of $K$ is an elliptic point. Thus for each $x\in\partial K$, we can represent the cap $(x_i^\perp)^{-} \cap  K$ in local coordinates as a cap of the ellipsoid  $\mathcal{E}(x)$ with axes length as the principal radii of curvature $\kappa_j(x)^{-1}$:
	\[
	\mathcal{E}(x):=\left\{\left(z_1,\ldots,z_{n-1},\sqrt{1 - \sum_{j=1}^{n-1}\frac{z^2_j}{\kappa_j(x)^{-2}}}\right): z_1,\ldots,z_{n-1}\in\R\right\}.
	\]Please note that when $N$ is large enough, for each $1\leq i\leq N$ the volume of the cap $(x_i^\perp)^{-} \cap  K$ equals the volume of  $(x_i^\perp)^{-}\cap \mathcal{E}(x_i)$, up to a term of negligible order in $N$. 
	Using a computation similar to the one in the proof of Corollary \ref{mainThm}, we derive that
	\begin{align}\label{ellipsoidcap}
	|(x_i^\perp)^{-}\cap (1+c_{K,N})\mathcal{E}(x_i)| &=|B_{n-1}|\int_{\langle\nu(x_i),x_i\rangle}^{1+c_{K,N}}\left((1+c_{K,N})^2-x^2\right)^{\frac{n-1}{2}}\prod_{j=1}^{n-1}\kappa_j(x)^{-1}\,dx \nonumber\\
	&\leq 3^{\frac{n-1}{2}}(1+c_{K,N})^{\frac{n-1}{2}}|B_{n-1}|\kappa(x_i)^{-1}\int_{\langle\nu(x_i),x_i\rangle}^{1+c_{K,N}}\left(1+c_{K,N}-x\right)^{\frac{n-1}{2}}\,dx \nonumber\\
	&\leq C(K)N^{-(1+\frac{2}{n-1})}(\ln N)^{1+\frac{2}{n-1}}, \nonumber
	\end{align}where $\kappa(x)$ denotes the Gaussian curvature of $K$ at $x\in\partial K$. The rest of the proof is similar to that of Lemma \ref{Yprop}. 
\end{proof}


\subsection{The random variable $Z$ is negligible}\label{sectionZnegligibleK}

Next, we extend Proposition \ref{Zprop} from the ball to all smooth convex bodies with positive curvature. The final ingredient we need follows from the papers \cite{erdHos1964amount,erdos1953covering} of Erd{\H{o}}s and Rogers. 

\begin{lemma}{\label{ErodsRogeres}}
	Let $ K $ be a $ C^2 $ convex body, and for fixed $\delta>0$ let $ \mathcal{N} $ be  a minimal $\delta$-net of $\partial K$, i.e., $\bigcup_{x\in\mathcal N}B_G(x,\delta) \supset \partial K$ and every $\delta$-net of $\partial K$ contains at least $|\mathcal N|$ elements. 
	Then when $ \delta $ is small enough, each point of $\partial K$ lies in the interior of no more than $ 4^n\frac{|\partial K|}{|\partial B_n|} $ balls of the covering.
\end{lemma}
This result is far from optimal; recall that B\"or\"oczky and Wintsche \cite{boroczky2003covering} showed that for a Euclidean ball of any radius, there is a covering such that each point of its boundary lies in the interior of no more than $400n\ln n$ geodesic balls.

\vskip .1in

The extension of Lemma \ref{Zprop} is given in the next lemma.
\begin{lemma}\label{ZpropK}
	\begin{equation*}	
	\Pr\left(Z \geq N^{-(0.5+\frac{2}{n-1})}\right) \leq \exp\left(-c_1(K)N^{0.5-\frac{2}{n-1}}\right).
	\end{equation*}
\end{lemma}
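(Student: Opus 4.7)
The plan is to adapt the argument for Lemma \ref{Zprop} from the sphere $\Sp$ to the boundary $\partial K$, exploiting the fact that $K$ is $C^2$ with strictly positive Gaussian curvature so that $\|x\|$, $\alpha(x)$, and $\kappa(x)$ are bounded above and below by positive constants depending only on $K$. First I would reduce $Z$ to a random variable measuring the surface measure of an uncovered set. Using monotonicity of the indicator in $r$ together with the bound $\|x\|\alpha(x) \leq C(K)$ on $\partial K$, one may replace the inner integrand in $Z$ by the constant value it takes at $r = 1 + c_{K,N}$ and bound the $r$-integral by $3/n$ as in the ball case. This yields
\[
Z \leq \tfrac{3}{n}C(K) \tilde{Z},\qquad
\tilde{Z} := |\partial K|\int_{\partial K}\mathbbm{1}_{\max\langle \nu(X_i),x\rangle \leq (1+c_{K,N})^{-1}\langle \nu(X_i),X_i\rangle}\,d\sigma_{\partial K}(x),
\]
so that $\tilde Z$ is the surface-measure deficit of a random partial ``covering'' of $\partial K$ by the caps $\partial K \cap \{y : \langle \nu(X_i),y\rangle > (1+c_{K,N})^{-1}\langle \nu(X_i),X_i\rangle\}$.

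Next I would observe that, by $C^2$ regularity and positivity of the curvature, each such cap has surface measure comparable (up to multiplicative constants depending only on $K$) to the corresponding spherical cap of height $\sim c_{K,N}$, which by the computation in Lemma \ref{Zprop} has surface measure of order $c(K)\ln(N)/N$. Thus the setting is exactly the one from the sphere case, with the role of $\sigma$ played by $\sigma_{\partial K}$ and with all quantitative estimates shifted by constants depending only on $K$.

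I would then run the balls-and-bins argument verbatim. Apply Lemma \ref{ErodsRogeres} (or a variant of B\"or\"oczky's covering result transported from the sphere to $\partial K$ via the Gauss map) to obtain a deterministic $\tfrac{1}{2}\NNN$-net $\mathcal{N}\subset\partial K$ of size at most $C(K)n\ln(n)\cdot 2^n N$ such that each point of $\partial K$ lies in no more than $C(K)n\ln(n)$ of the corresponding geodesic balls. Define $S_{\max}\subset\mathcal N$ as the set of net points at geodesic distance at least $\tfrac{1}{2}\NNN$ from every $X_i$, and observe that if $\tilde Z \geq |\partial K|N^{-(0.5+2/(n-1))}$ then necessarily $|S_{\max}|\geq 2^{n-1}N^{0.5-2/(n-1)}$, by the same contrapositive estimate as in Lemma \ref{Zprop} with the geodesic-ball volumes in $\partial K$ replacing those in $\Sp$. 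A union bound over subsets $S\subset\mathcal N$ of size $2^{n-1}N^{0.5-2/(n-1)}$, together with independence and the bound $\sigma_{\partial K}(A_{S_{\max}}) \geq c(K)N^{-(0.5+2/(n-1))}$, gives the estimate
\[
\Pr(B) \leq \exp\left(c_2(K)\ln(N)N^{0.5-\frac{2}{n-1}} - c(K)c_1(K)\ln(N)N^{0.5-\frac{2}{n-1}}\right),
\]
and choosing $c(K)$ large enough concludes the proof.

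The main technical obstacle is uniform control, in the center $X_i\in\partial K$, of the ratio between the Euclidean cap volume cut off from $(1+c_{K,N})K$ by the hyperplane $\{\langle\nu(X_i),\cdot\rangle = (1+c_{K,N})^{-1}\langle\nu(X_i),X_i\rangle\}$ and the surface measure on $\partial K$ of the corresponding ``shadow'' region — this is what converts the volumetric expression for $Z$ into a covering-on-$\partial K$ problem to which the Erd\H{o}s--Rogers-type lemma applies. Compactness of $\partial K$ and the uniform $C^2$ bound together with strict positivity of $\kappa$ make all relevant constants depend only on $K$, absorbing into $c_1(K)$.
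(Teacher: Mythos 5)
Your proposal follows essentially the same path as the paper: bound $Z \leq C(K)\tilde Z$ using $\alpha(x)\leq 1$ and $\|x\|\leq\diam(K)$, interpret $\tilde Z$ as the surface-measure deficit of a random partial ``covering'' of $\partial K$ by caps of measure $\sim c(K)\ln(N)/N$, pass to a deterministic bounded-multiplicity net of $\partial K$, and run the $S_{\max}$ balls-and-bins union bound from Lemma~\ref{Zprop}. The only cosmetic difference is that the paper phrases the covering pieces explicitly as geodesic ellipsoids (with axes set by the principal curvatures), each of which contains a geodesic ball of radius $r_{N,K}=c_1(K)\min_{x,i}\kappa_i(x)^{-1}\NNN$, so the net is taken at scale $\tfrac12 r_{N,K}$ rather than exactly $\tfrac12\NNN$; this is precisely the $K$-dependent constant adjustment you already anticipate and absorb into $c_1(K)$.
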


\begin{proof}
	Without loss of generality, we can assume that the origin lies in the interior of $K$, so that $\max_{x\in \partial K}\|x\| \leq \diam(K)$. 
	Moreover, $\alpha(x) := \cos(\measuredangle(\nu(x),x)) \leq 1$ for any $x\in\partial K$. Thus,
	\begin{align*}Z & =|\partial K|\int_{1+c_{K,N}}^{1+\frac{2}{n}}r^{n-1}\int_{\partial K}\|x\|\alpha(x)\mathbbm{1}_{\max\langle\nu(X_{i}),x\rangle\leq r^{-1}\langle\nu(X_{i}),X_{i}\rangle}(x)\,d\sigma_{\partial K}(x)\,dr\\
	& \leq\diam(K)|\partial K|\int_{1+c_{K,N}}^{1+\frac{2}{n}}r^{n-1}\,dr\int_{\partial K}\mathbbm{1}_{\max\langle\nu(X_{i}),x\rangle\leq(1+c_{K,N})^{-1}\langle\nu(X_{i}),X_{i}\rangle}(x)\,d\sigma_{\partial K}(x)\\
	& \leq Cn^{-1}\diam(K)|\partial K|\int_{\partial K}\mathbbm{1}_{\max\langle\nu(X_{i}),x\rangle\leq(1+c_{K,N})^{-1}\langle\nu(X_{i}),X_{i}\rangle}(x)\,d\sigma_{\partial K}(x).
	\end{align*}
As in the proof of Lemma \ref{Zprop}, we  define 
\[
\tilde Z := |\partial K|\int_{\partial K} \mathbbm{1}_{\max\langle\nu(X_{i}),x\rangle\leq(1+c_{K,N})^{-1}\langle\nu(X_{i}),X_{i}\rangle}(x)\,d\sigma_{\partial K}(x).
\] 
The random variable $\tilde Z$  measures the missing surface area of a random  ``covering" of $K$ by $ N $ random geodesic ellipsoids of volume $ c(K)\frac{\ln N}{N}$. By independence, its expected value  can be estimated by
	\begin{align*}\E[\tilde{Z}] & = |\partial K| \prod_{i=1}^{N}\Pr\left(\langle\nu(X_{i}),x\rangle\leq(1+c_{K,N})^{-1}\langle\nu(X_{i}),X_{i}\rangle\right)\\
	& \leq |\partial K|\left(1-\frac{c(K)^{n-1}C(K)^{\frac{n-1}{2}}|B_{n-1}|(\ln N)}{N|\partial K|}\right)^{N}\\
	& \leq |\partial K|\exp\left(-\frac{c(K)^{n-1}C(K)^{\frac{n-1}{2}}|B_{n-1}|}{|\partial K|}\cdot\ln N\right)\\
	& =|\partial K| N^{-C(K)^{\frac{n-1}{2}}c(K)^{n-1}|B_{n-1}|/|\partial K|}.
	\end{align*}
	
	Next, we show that $ \tilde{Z} $ is negligible with high probability. As in the proof of Lemma \ref{Zprop}, we can reduce the problem to the following random ``covering" of $K$ by $C(K)N\ln N$ random ellipsoids of volume $c_i(K)N^{-1}$, $1\leq i\leq C(K)N\ln N$. Let $\mathbf{X} := \{X_1,\ldots, X_{C(K)N\ln N}\} $ and define 
	\[
	A_{\mathbf{X}} :=\bigcup_{i=1}^{C(K)N\ln N} \mathcal{E}(X_i,c_i(K)N^{-1}) ,
	\]
	where $ \mathcal{E}(X_i,c_i(K)N^{-1})  \subset \partial K$ denotes the geodesic ellipsoid centered at $X_i$ with volume $c_i(K)N^{-1}$.
	In particular, every ellipsoid contains a ball of radius 
	\[
	r_{N,K}:= c_1(K)\min_{x\in\partial K}\min_{1\leq i\leq n-1}\kappa_i(x)^{-1}N^{-\frac{1}{n-1}}.
	\]
	Now as in Lemma \ref{Zprop}, it suffices to prove that
	\[
	\Pr\left(\tilde Z \geq N^{-(0.5+\frac{2}{n-1})}\right) \leq \exp\left(-c_2(K)N^{-(0.5+\frac{2}{n-1})}\right) .
	\]
	In order to apply the same proof of Lemma \ref{Zprop},  we need  a covering of $\partial K$ by geodesic balls of radius $ \frac{1}{2}r_{N,K} $ such that each point is counted no more than $ c_4(K)  $ times. Indeed, Lemma \ref{ErodsRogeres} provides such a covering. Now the rest of the proof proceeds similarly to that of Lemma \ref{Zprop}, and we ultimately derive that
	\[
	\Pr(B) \leq  \exp\left(c_2(K)(\ln N)N^{0.5-\frac{2}{n-1}}-c(K)c_1(K)(\ln N)N^{0.5-\frac{2}{n-1}}\right).
	\]
	Choosing $c(K)$ to be large enough yields the lemma.
\end{proof}



Finally, we turn our attention to the random variable $W$. 

\begin{lemma}\label{WpropK}
	When $N$ is large enough, the polytope $P_{n,N}$ lies in $(1+2/n)K$ with probability at least $1-e^{-c(K,\mu)N}$.
\end{lemma}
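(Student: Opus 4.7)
The plan is to mirror the two-step structure of the proof of Lemma \ref{Wconcentration} for the unit ball: a deterministic geometric step, followed by a probabilistic step. Since $K$ is $C^2$ with strictly positive Gaussian curvature, the Gauss map $\nu:\partial K\to\Sp$ is a bi-Lipschitz diffeomorphism (with respect to the geodesic metric on $\partial K$), with Lipschitz constant $L(K):=\sup_{x\in\partial K}\kappa_{\max}(x)<\infty$ depending only on the principal curvatures of $K$; moreover, the support function $h_K$ is $\diam(K)$-Lipschitz on $\Sp$, and we may assume $0\in\mathrm{int}(K)$ so that $h_K(u)\geq r(K)>0$ for all unit $u$, where $r(K)$ is the inradius.

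The deterministic step claims: there is a constant $\delta_0(K)>0$ such that whenever $\mathcal{N}=\{X_1,\ldots,X_N\}$ is a $(\delta_0(K)/n)$-net of $(\partial K,d_G)$, one has $P_{n,N}\subset(1+\tfrac{2}{n})K$. I would argue by contradiction: suppose some $v\in P_{n,N}\setminus(1+\tfrac{2}{n})K$. Since both $P_{n,N}$ and $(1+\tfrac{2}{n})K$ are convex and contain $0$, by convexity of $P_{n,N}$ I may replace $v$ by the point where the segment $[0,v]$ crosses $\partial((1+\tfrac{2}{n})K)$, so WLOG $v\in\partial((1+\tfrac{2}{n})K)\cap P_{n,N}$ and $\|v\|\leq 2\diam(K)$. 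Let $u\in\Sp$ be the outer unit normal of $(1+\tfrac{2}{n})K$ at $v$, so $\langle u,v\rangle=(1+\tfrac{2}{n})h_K(u)$, and set $y:=\nu^{-1}(u)\in\partial K$. Pick $X_i$ with $d_G(X_i,y)\leq\delta_0(K)/n$, which gives $|\nu(X_i)-u|\leq L(K)\delta_0(K)/n$. Combining the two Lipschitz bounds,
\[
\langle\nu(X_i),v\rangle-h_K(\nu(X_i))\;\geq\;\tfrac{2}{n}h_K(u)-(\|v\|+\diam(K))L(K)\tfrac{\delta_0(K)}{n}\;\geq\;\tfrac{2r(K)}{n}-\tfrac{3\diam(K)L(K)\delta_0(K)}{n},
\]
which is strictly positive once $\delta_0(K):=\frac{r(K)}{2\diam(K)L(K)}$, contradicting $v\in P_{n,N}$.

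The probabilistic step shows that the random set $\mathcal{N}$ is such a $\delta$-net with probability at least $1-e^{-C(K)N}$, mimicking Lemma \ref{deltanet}. Take a deterministic $\frac{\delta}{2}$-net $\mathcal{M}$ of $(\partial K,d_G)$ (e.g.\ from Corollary 5.5 of \cite{aubrun2017alice}) of size $|\mathcal{M}|\leq C_1(K)n^{n-1}$. For each fixed $z\in\mathcal{M}$, the uniform surface measure of $B_G(z,\delta/2)$ is at least $c(K)n^{-(n-1)}$, so by independence $\Pr(\min_i d_G(X_i,z)>\delta/2)\leq\exp(-c_1(K,n)N)$. A union bound over $\mathcal{M}$ plus the triangle inequality yields $\Pr(\mathcal{N}\text{ is not a }\delta\text{-net})\leq C_1(K)n^{n-1}\exp(-c_1(K,n)N)\leq e^{-C(K)N}$ for $N$ large. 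Combining with the deterministic step proves the lemma.

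There is no genuine obstacle; the only care required is to verify that the bi-Lipschitz constants for $\nu$, the Lipschitz constant for $h_K$, the inradius, and the volume estimates for small geodesic balls all depend only on $K$ via its $C^2$-regularity and strict positivity of curvature. Note that compared to the ball case we use a $c(K)/n$-net rather than the $1/\sqrt{n}$-net of Lemma \ref{lemma1W}; this slightly worse scale is harmless, since it is absorbed into the constant $C(K)$ in the final exponent.
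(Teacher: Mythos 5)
Your proof follows the same two-step blueprint as the paper's: a deterministic step showing that a sufficiently fine net of $\partial K$ forces $P_{n,N}\subset(1+\tfrac{2}{n})K$, and a probabilistic step showing the random points form such a net with probability $1-e^{-C(K)N}$; the probabilistic step (covering by a deterministic net and a union bound) is essentially identical to the paper's. The one real difference is in the deterministic step: the paper takes $v_0\in\partial K$ with $(1+\tfrac 2n)v_0\in P_{n,N}$, picks the $X_j$ nearest to $v_0$, expands $\langle(1+\tfrac 2n)v_0-X_j,\nu(X_j)\rangle$, and applies Cauchy--Schwarz together with $\|X_j-v_0\|\le d_G(X_j,v_0)$ — no Lipschitz control of the Gauss map is needed. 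You instead locate the normal direction $u$ at the violating point of $\partial((1+\tfrac 2n)K)$, pull back through $\nu^{-1}$, and bound $\|\nu(X_i)-u\|$ via the Lipschitz constant $L(K)$ of $\nu$; this works (and relies on the strict positivity of curvature and $C^2$ regularity to make $\nu$ a bi-Lipschitz diffeomorphism) but is a bit less elementary. On the other hand your net scale $\delta_0(K)/n$ with an explicit $K$-dependent constant is somewhat more careful than the paper's $\tfrac{1}{n^2}$-net, whose contradiction $\langle X_j,\nu(X_j)\rangle<\tfrac 1n$ tacitly requires $r(K)\ge 1/n$; since all constants are allowed to depend on $K$ (and hence on $n$), this is harmless either way.
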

The proof is similar to that of Lemma~ \ref{Wconcentration}, where now we replace the spherical caps by geodesic ellipsoids. We leave the details to the interested reader. 
\subsection*{Conclusion of the proof of Theorem \ref{geneThm}.}

Recalling that $|P_{n,N} \setminus K| = Y + Z+W$,  the rest of the proof of Theorem \ref{geneThm} proceeds in the same way as the proof of Corollary \ref{mainThm} in Subsection \ref{proofsectionmainthm}. \qed


\bibliographystyle{spmpsci}      
\bibliography{biblo}   


\end{document}